\theoremstyle{plain}
\newtheorem{theorem}{Theorem}
\newtheorem{lemma}{Lemma}
\newtheorem{proposition}{Proposition}
\newtheorem{problem}{Problem}
\newtheorem{corollary}{Corollary}
\theoremstyle{definition}
\newtheorem{example}{Example}
\newtheorem{definition}{Definition}
\newtheorem{remark}{Remark}
\newcommand{\te}{\theta}
\newcommand{\R}{\mathbb R}
\newcommand{\hide}[1]{} 		
\newcommand{\beq}{\begin{equation}}
\newcommand{\eeq}{\end{equation}}
\newcommand{\beqna}{\begin{eqnarray*}}
\newcommand{\eeqna}{\end{eqnarray*}}
\newcommand{\beqn}{\begin{equation*}}
\newcommand{\eeqn}{\end{equation*}}
\newcommand{\bp}{\begin{proof}}
\newcommand{\ep}{\end{proof}}
\newcommand{\bprop}{\begin{proposition}}
\newcommand{\eprop}{\end{proposition}}
\newcommand{\bt}{\begin{theorem}}
\newcommand{\et}{\end{theorem}}
\newcommand{\bex}{\begin{example}}
\newcommand{\eex}{\end{example}}
\newcommand{\bc}{\begin{corollary}}
\newcommand{\ec}{\end{corollary}}
\newcommand{\bl}{\begin{lemma}}
\newcommand{\el}{\end{lemma}}
\newcommand{\bprob}{\begin{problem}}
\newcommand{\eprob}{\end{problem}}
\newcommand{\br}{\begin{remark}}
\newcommand{\er}{\end{remark}}
\newcommand{\bd}{\begin{definition}}
\newcommand{\ed}{\end{definition}}
\begin{document}

\title[Counterexamples]{
Counterexamples Related to Rotations of Shadows of Convex Bodies}

\author{M. Angeles Alfonseca}
\address{Department of Mathematics, North Dakota State University,
Fargo,  USA} \email{maria.alfonseca@ndsu.edu}

\author{Michelle Cordier}
\address{Department of Mathematics, Kent State University,
Kent, OH 44242, USA} \email{mcordier@math.kent.edu}

\thanks{The first author is supported in part by U.S.~National Science Foundation grant DMS-1100657.}

\footnotetext{2010 Mathematics Subject Classification: Primary:  52A20, 52A38, 44A12. }
%\footnotetext{Key words and phrases: sections of convex bodies, rotations.}}

\maketitle

\begin{abstract} 
We construct  examples of two convex bodies $K,L$ in $\mathbb{R}^n$, such that every projection of $K$ onto a $(n-1)$-dimensional subspace can be rotated to be contained in the corresponding projection of $L$, but $K$ itself cannot be rotated to be contained in $L$. We also find necessary conditions on $K$ and $L$ to ensure that $K$ can be rotated to be contained in $L$ if all the $(n-1)$-dimensional projections have this property.
\end{abstract}

\section{Introduction}

Let $K$ be a convex body in $\mathbb{R}^n$. Given a unit vector $\xi \in S^{n-1}$, we will denote by $K|\xi^\perp$ the orthogonal projection of $K$ on the hyperplane $\xi^\perp=\{x \in \mathbb{R}^n: x \cdot \xi =0\}$. Let $SO(n)$ be the group of rotations in $\mathbb{R}^n$, and  $SO(n-1,\xi^\perp)$ be the group of rotations on the hyperplane $\xi^\perp$. In this paper we study the following problem.

\bprob
 \label{prob2}
{\it  Let $K$, $L$, be  convex bodies in $\mathbb{R}^n$. Suppose that for every $\xi \in S^{n-1}$, the orthogonal projection $K|\xi^\perp$ can be rotated around the origin to fit inside  $L|\xi^\perp$, {\it i.e.} there exists a rotation $\varphi_\xi \in SO(n-1,\xi^\perp)$   such that $\varphi_\xi (K|\xi^\perp) \subseteq L|\xi^\perp$.

 (a) Does it follow that $L$ contains a rotation of $K$, {\it i.e.}, is there a $\psi \in SO(n)$ such that $\psi(K)\subseteq L$?
 
 (b) Does it follow that $vol_n(K) \leq vol_n(L)$?}
\eprob

In  \cite{K1}, D. Klain studied the same questions with translations instead of rotations.   He proved that  the answer to question \ref{prob2}$(a)$ for translations is negative in general, in any dimension. A counterexample is obtained by considering a ball $B$, together with the dilated simplex $(1+\epsilon)T$, where $T$ is the simplex inscribed in $B$. Then, for any $\varepsilon>0$,  the dilated simplex $(1+\varepsilon)T$ is not contained in the ball nor can be translated to fit inside, but if $\varepsilon$ is small enough, all the projections of $(1+\varepsilon)T$ on hyperplanes can be  translated to fit inside the corresponding projections of the ball.  Klain also proved that if both bodies are centrally symmetric, the answer to Problem \ref{prob2}$(a)$ for translations is affirmative.

Regarding question \ref{prob2}$(b)$ for translations, Klain showed that the answer is negative, in general,  but that there  exists a class of bodies such that the answer to \ref{prob2}$(b)$ for translations is affirmative if $L$ belongs to that class.

Problem 1 and the analogous problem for translations are both related to the well-known Shephard's Problem (see \cite{Sh}).

\bigskip

\noindent {\bf Shephard's Problem:}
{\it   Let $K,L$ be origin symmetric convex bodies. If for every $\xi \in S^{n-1}$, we have $vol_{n-1}(K|\xi^\perp) \leq vol_{n-1}(L|\xi^\perp)$, does it follow that $vol_n(K) \leq vol_n(L)$?}

\bigskip

It was proven independently by Petty \cite{P} and Schneider \cite{Sc1} that the answer to Shephard's Problem is negative in general  in dimension $n \geq 3$. In other words, a body $K$ may have greater volume than another body $L$, even if all projections of $K$ have smaller $(n-1)$-dimensional volume than the corresponding projections of $L$. In fact, $K$ may be taken to be a ball, while $L$ is a centrally symmetric double cone (see \cite[Theorem 4.2.4]{Ga}). Petty and Schneider also proved that the answer is affirmative under the additional  assumption that the body $L$ is a {\it projection body} (see \cite[Section 4.1]{Ga}).

Observe  that if $K$ and $L$ are two convex bodies for which the answer to Problem \ref{prob2}$(a)$ is affirmative for either rotations or translations, then Shephard's problem for $K$ and $L$ also has an affirmative answer.

It is natural to consider the analogous question to Problem \ref{prob2}, replacing projections by sections.  Here,  $K\cap \xi^\perp$ denotes the section of $K$ by the hyperplane $\xi^\perp$.

\bprob
 \label{prob3}
{\it  Let $K$, $L$, be convex bodies in $\mathbb{R}^n$. Suppose that for every $\xi \in S^{n-1}$, the  section $K \cap \xi^\perp$ can be rotated around the origin to fit inside $L\cap \xi^\perp$. 

 (a) Does $L$ contain a rotation of $K$? 
 
 (b) Does it follow that $vol_n(K) \leq vol_n(L)$?}
\eprob

In the case of Problem \ref{prob3} for translations, it is known that if $K$ and $L$ are bodies containing the origin in their interior,  such that $K \cap \xi^\perp$ is a translate of $L \cap \xi^\perp$ for every $\xi \in S^{n-1}$, then $K$ is a translate of $L$ (see \cite[Theorem 7.1.1]{Ga}).

We note that Klain's and Shephard's counterexamples will not work for our Problems \ref{prob2} and \ref{prob3},  since in both cases, one of the bodies they consider is a ball, which is invariant under rotations, as are all of its projections and sections. In Section \ref{3d}, we present counterexamples for  Problem \ref{prob2}$(a)$ and Problem \ref{prob3}$(a)$. The first counterexample is in $\mathbb{R}^3$, and consists of a cylinder $C$ and a double cone $K$.  We note that both the cylinder and double cone are centrally symmetric bodies,  and hence, unlike the case of translations proved by Klain, our  Problem \ref{prob2}$(a)$ does not have an affirmative answer for centrally symmetric bodies. The second example, which works in general dimension $n$, is given by  appropriately chosen perturbations of two balls, following ideas of Kuzminykh and Nazarov. However, none of our  counterexamples provide an affirmative answer to Problems \ref{prob2}$(b)$ or \ref{prob3}$(b)$.

In Section \ref{vols}, we obtain a positive answer for Problem \ref{prob2}$(a)$, in $\mathbb{R}^3$, assuming a Hadwiger type additional condition on the bodies $K$ and $L$ (see \cite{Ha}). We also prove that the answer to Problem \ref{prob3}$(b)$ is affirmative (Theorem \ref{MM}).  
However, for the case of projections, the argument only allows us to conclude the relation $vol_n(K^*) \geq vol_n(L^*)$ for the polar bodies, while Problem \ref{prob2}$(b)$ remains open.

In Section \ref{cong}, we study the case in which the projections of $K$ are {\it equal}, up to a rotation, to the corresponding projections of $L$. We prove that if $K$ and $L$ are bodies in $\mathbb{R}^3$ with  countably many diameters, and that their hyperplane projections do not have certain rotational symmetries, then $K=\pm L$. For $n\geq 4$, an $n$-dimensional version of this result has been obtained by the authors and D. Ryabogin in \cite{ACR}, following ideas of Golubyatnikov \cite{Go} and Ryabogin \cite{Rubik}.

{\it Acknowledgements:} We wish to thank Dmitry Ryabogin for many useful discussions, and Fedor Nazarov for the idea of the $n$-dimensional counterexample.

\section{Notation}
\label{nota}

In this section we introduce the notation that will be used throughout the paper.

The unit  sphere in ${\mathbb R}^n$ ($n\ge 2$), is   $S^{n-1}$.  The notation $SO(n)$ for the group of rotations in $\mathbb{R}^n$, and $SO(k)$, $2\le k\le n$ for  their subgroups is standard. If ${\mathcal U}\in SO(n)$ is an orthogonal matrix, we will write ${\mathcal U}^t$ for its transpose.

We will  write $\varphi_{\xi}\in SO(n-1, \xi^\perp)$, meaning that  there exists a  choice of an orthonormal basis in ${\mathbb R}^n$ and a rotation $\Phi\in SO(n)$, with a matrix written in this basis, such that  the action of $\Phi$ on
 $\xi^\perp$ is  the  rotation $\varphi_\xi$ in $\xi^\perp$, and the  action of $\Phi$ in the $\xi$ direction is trivial, {\it i.e.}, $\Phi(\xi)=\xi$. 

We refer to \cite[Chapter 1]{Ga} for the next definitions. A {\em body} in $\R^n$ is a compact set which is equal to the closure of its non-empty interior.  A {\it convex body} is a body $K$ such that for every pair of points in $K$, the segment joining them is contained in $K$.  
For $x \in \mathbb{R}^n$, the {\it support function} of a convex body $K$ is defined as 
$
h_K(x)=\max \{x\cdot y:\, \,y\in K   \}
$
 (see page 16 in \cite{Ga}). The support function uniquely determines a convex body, and $h_{K_1}\leq h_{K_2}$ if and only if $K_1 \subseteq K_2$.
 
 The {\it width function} $\omega_K(x)$ of $K$ in the direction $x\in S^{n-1}$ is defined as
	$\omega_K(x)=h_K(x)+h_K(-x)$. The segment $[z,y]\subset K$ is called the {\it diameter} of the body $K$  if $|z-y|=\max\limits_{\{\theta\in S^{n-1}\}}\omega_K(\theta)$.  Note that a convex body $K$ can have at most one diameter parallel to a given direction, for if $K$ has two parallel diameters $d_1$ and $d_2$, then $K$ contains a parallelogram with sides $d_1$ and $d_2$, and one of the diagonals of this parallelogram is longer than $d_1$. If a diameter of $K$ is parallel to the direction $\zeta\in S^{n-1}$, we will denote it by  $d_K(\zeta)$. We say that a convex body $K\subset {\mathbb R^n}$ has countably many diameters if the width function $\omega_K$  reaches its maximum on a countable subset of $S^{n-1}$.  Also, a body has {\it constant width} if its width function is constant.

A set $E\subset\R^n$ is said to be {\it star-shaped at a point $p$}  if the line segment from $p$ to any point in $E$ is contained in $E$.  
Let $x\in {\mathbb R}^n\setminus \{0\}$, and let $K\subset {\mathbb R}^n$ be a star-shaped set at the origin. The {\it radial function}  $\rho_K$ is defined as 
$
\rho_K(x)=\max \{c:\,cx\in K   \},
$
(here the line through $x$ and the origin is assumed to meet $K$,
\cite[page 18]{Ga}).  We say that a body $K$ is a {\it star body} if it $K$ is star-shaped at the origin and its radial function $\rho_K$ is continuous. 
 The radial function uniquely determines a star body, and $\rho_{K_1} \leq \rho_{K_2}$ if and only if $K_1 \subseteq K_2$.

 For a subset $E$ of $\mathbb{R}^n$, the {\it polar set} of $E$ is defined as $E^*=\{x: x \cdot y \leq 1 \mbox{  for every  }  y\in E\}$ (see  \cite[pages 20-22]{Ga}). When $K$ is a convex body containing the origin, the same is true of $K^*$ (which is called the {\it polar body} of $K$), and we have the following relation between the support function of $K$ and the radial function of $K^*$: For every $u \in S^{n-1}$,
\begin{equation}
 \label{radsup}
      \rho_{K^*}(u)=1/h_K (u).
\end{equation}
For any linear transformation $\phi \in GL(n)$, we have
\begin{equation}
\label{rots}
     h_{\phi K} (u)=h_K (\phi^t u).
\end{equation}
A similar relation 
\begin{equation}
 \label{rotsrho}
\rho_{\phi K} (u)=\rho_K (\phi^{-1} u)
\end{equation}
 holds for the radial function. Combining  (\ref{radsup}),  (\ref{rots}) and (\ref{rotsrho}), it follows that $h_{(\phi K)^*}(u)=h_{\phi^{-t} K^*}(u)$ (see \cite[page 21]{Ga}); this gives us the identity  $(\phi K)^*=\phi^{-t} K^*$ for the polar of a  linear transformation of the body $K$. 

If $S$ is a subspace of $\mathbb{R}^n$, $h_{K|S}(u)=h_K(u)$ and  $\rho_{K\cap S}(u)=\rho_K(u)$, for every $u \in S^{n-1} \cap S$.  Combining these facts with (\ref{radsup}), we obtain the polarity relation between sections and projections, 
\begin{equation}
  \label{secproj}
     \rho_{K^*\cap S}(u)=\rho_{(K|S)^*}(u).
\end{equation}

\section{Counterexamples for Problems  \ref{prob2}$(a)$ and \ref{prob3}$(a)$}
\label{3d}

\subsection{Counterexample in $\mathbb{R}^3$.} Our first counterexample is three-dimensional, and it is provided by two centrally symmetric convex bodies, a cylinder $C$ and a double cone $K$. We show that all {\it sections} of $C$ can be rotated to fit into the corresponding sections of $K$, and that $C$ cannot be rotated to fit inside $K$. Due to the relations  (\ref{rots}), (\ref{rotsrho}), and polarity (\ref{secproj}), this will imply that all projections of  $K^*$ fit into the corresponding projections of  $C^*$ after a rotation, while no rotation of $K^*$ is included in $C^*$. Thus, our two bodies  provide at the same time  counterexamples for \ref{prob2}$(a)$ and  \ref{prob3}$(a)$.

Let $C \subset \mathbb{R}^3$ be the cylinder around the $z$-axis, centered at the origin, with radius $r$ and height $2r$, where \mbox{$\frac{1}{2}<r < \sqrt{2-\sqrt{3}}=0.5176\ldots$} Let $K$ be the double cone obtained by rotating the triangle with vertices $(0,0,\pm1)$  and $(1,0,0)$ around the $z$-axis.  Since $r>1/2$, the cylinder $C$ is not contained in the double cone $K$; in fact, the condition $r>1/2$ is enough to guarantee that no rotation of $C$ is contained in $K$ (the proof of this fact is given in the Appendix, Lemma \ref{no3Drot}). 

%As mentioned after equation (\ref{secproj}), it will be enough to show that every  section of the cylinder $C$ by a plane passing through the origin can be rotated to be included in the corresponding plane section of $K$. 
Observe that the polar body of $C$ is the double cone obtained by rotating the triangle with vertices $(0,0,\pm 1/r)$ and $(1/r,0,0)$ around the $z$-axis. The polar body of $K$ is a cylinder with radius 1 and height 2. Hence, the dilation of $C^*$ by a factor $r$ is equal to $K$, and similarly the dilation of  $K^*$ by $r$ is equal to $C$. By proving that all sections of $C$ can be rotated to fit into the sections of $K$, we are in fact proving that all projections of $C$ are included in the corresponding projections of $K$ after a rotation. Here we present a sketch of the argument, with the detailed calculations shown in the Appendix.

\begin{figure}[h!]
\centering
\includegraphics[scale=.4,page=1]{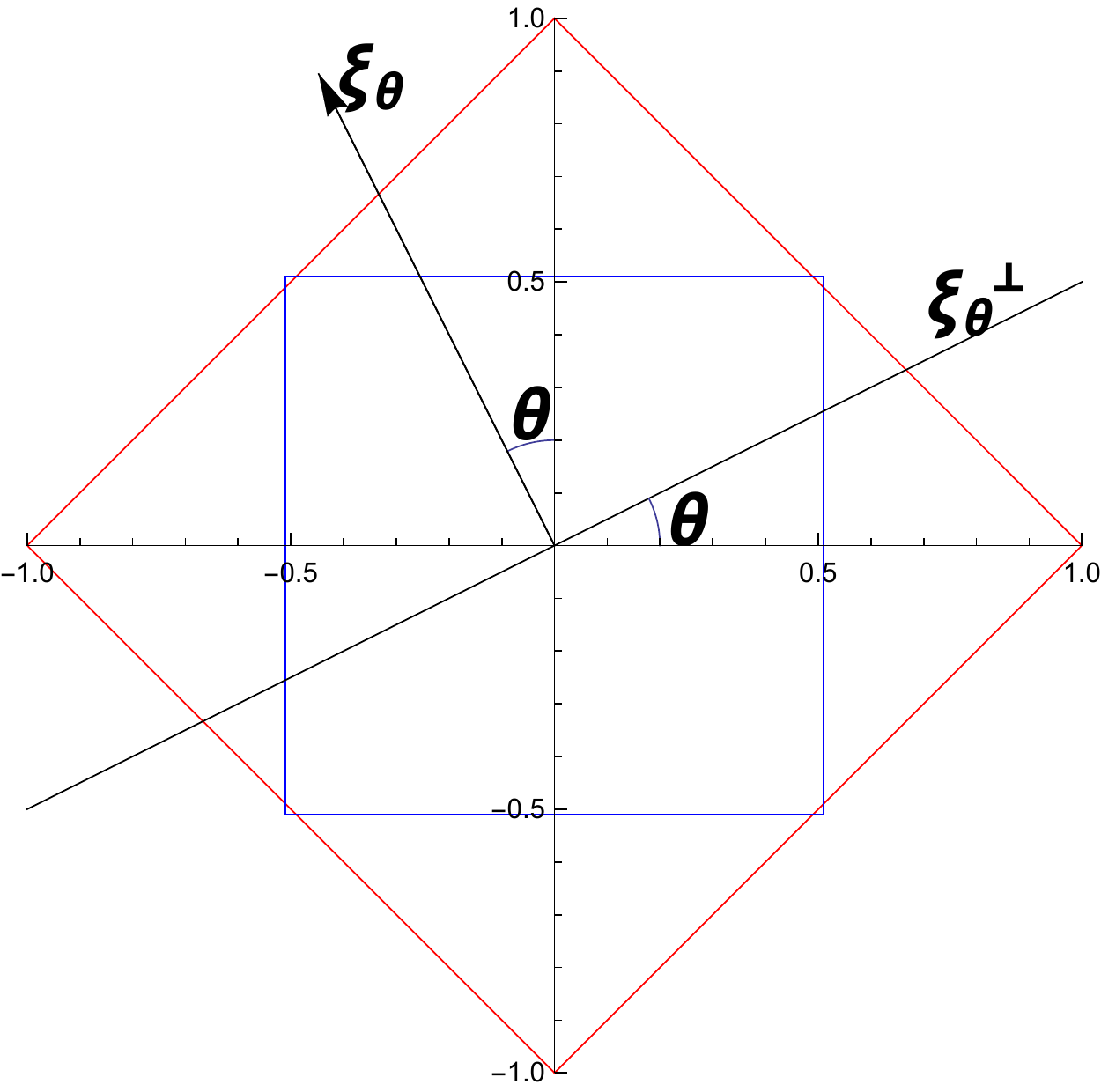}
\caption{Cross-section of the cylinder $C$ and the double cone $K$.}
\label{xsection}
\end{figure}

Since $C$ and $K$ are centrally symmetric bodies of revolution, it is enough to study their sections by planes perpendicular to $\xi_\te=(-\sin(\te),0,\cos(\te))$, where $\te \in [0,\pi/2]$ is the vertical angle from  the axis of revolution (see Figure \ref{xsection}). As seen in the Appendix, the radial function of the section of the double cone $K$ by $\xi_\te^\perp$ is 
\begin{equation}
 \label{radialcone}
    \rho_{K_\te}(u)=\frac{\sec(u)}{\sin(\te)+\sqrt{\tan^2(u)+\cos^2(\te)}}.
\end{equation}
When $\te \in [0,\pi/4]$, the section of the cylinder by $\xi_\theta^\perp$ is an ellipse  with semiaxes of length $r\sec \te$  and $r$. Its radial function is 
\begin{equation}
 \label{radialcyl1}
    \rho_{C_\te}(u)=\frac{r \sec(u)}{\sqrt{\tan^2(u)+\cos^2(\te)}}. 
\end{equation}
On the other hand, when $\te \in (\pi/4,\pi/2]$, the section of the cylinder looks like an ellipse with semiaxes of length $r\sec \te$ (along the $x$-axis) and $r$ (along the $y$-axis), that has been truncated by two vertical lines at $x= \pm r \csc(\te)$. Its radial function is 
\begin{equation}
 \label{radialcyl2}
    \rho_{C_\te}(u)=\left\{ \begin{array}{cc}
   r\sec(u)\csc(\te)    & 0 \leq u \leq u_0, \\
\frac{r \sec(u)}{\sqrt{\tan^2(u)+\cos^2(\te)}} & u_0 \leq u \leq \pi/2,
\end{array}
\right.
\end{equation}
where $u_0=\arctan(\sqrt{\sin^2(\te)-\cos^2(\te)})$. 

\bigskip

Let $\te_0=\arctan\left(\frac{1-r}{r}\right)$. For $\te \in [0,\te_0]$, the section  $C \cap \xi_\te^\perp$ is contained in $K  \cap \xi_\te^\perp$ and there is nothing to prove (see Figure \ref{xsection}).  For $\te \in (\te_0,\pi/4]$,  $C \cap \xi_\te^\perp$ is not a subset of $K  \cap \xi_\te^\perp$. However, a rotation by $\pi/2$  of the section of the cylinder is contained in the section of the double cone. Indeed, from equation (\ref{radialcyl1}) we easily see that the rotation by  $\pi/2$ of the section of the cylinder has radial function 
\begin{equation} 
 \label{radialcyl90}
        \widetilde{\rho}_{C_{\te}}(u)=\frac{r \csc(u)}{\sqrt{\cot^2(u)+\cos^2(\te)}}.
\end{equation}
\begin{figure}[h!]
\centering
\includegraphics[scale=.32,page=1]{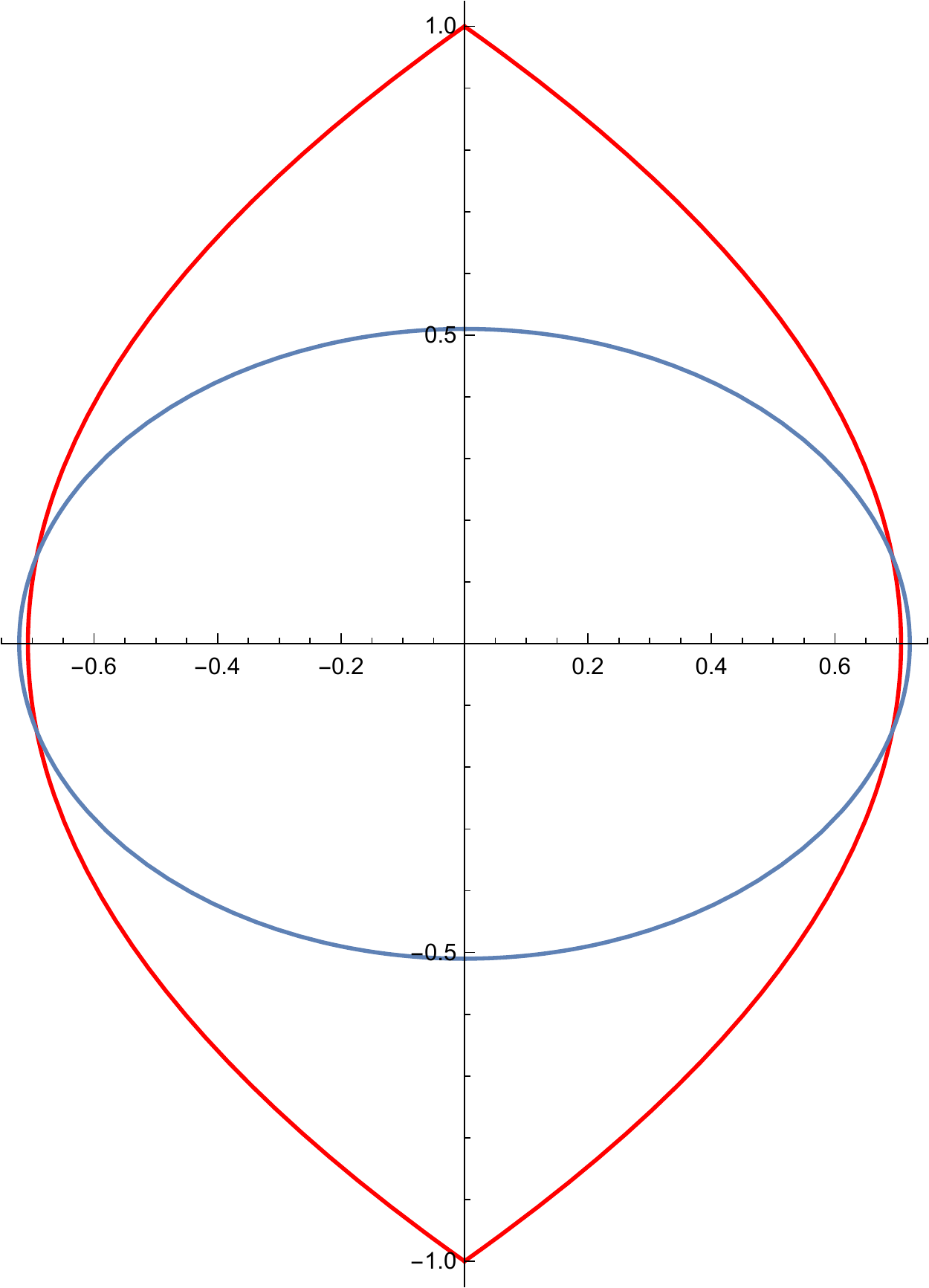}
\includegraphics[scale=.34,page=1]{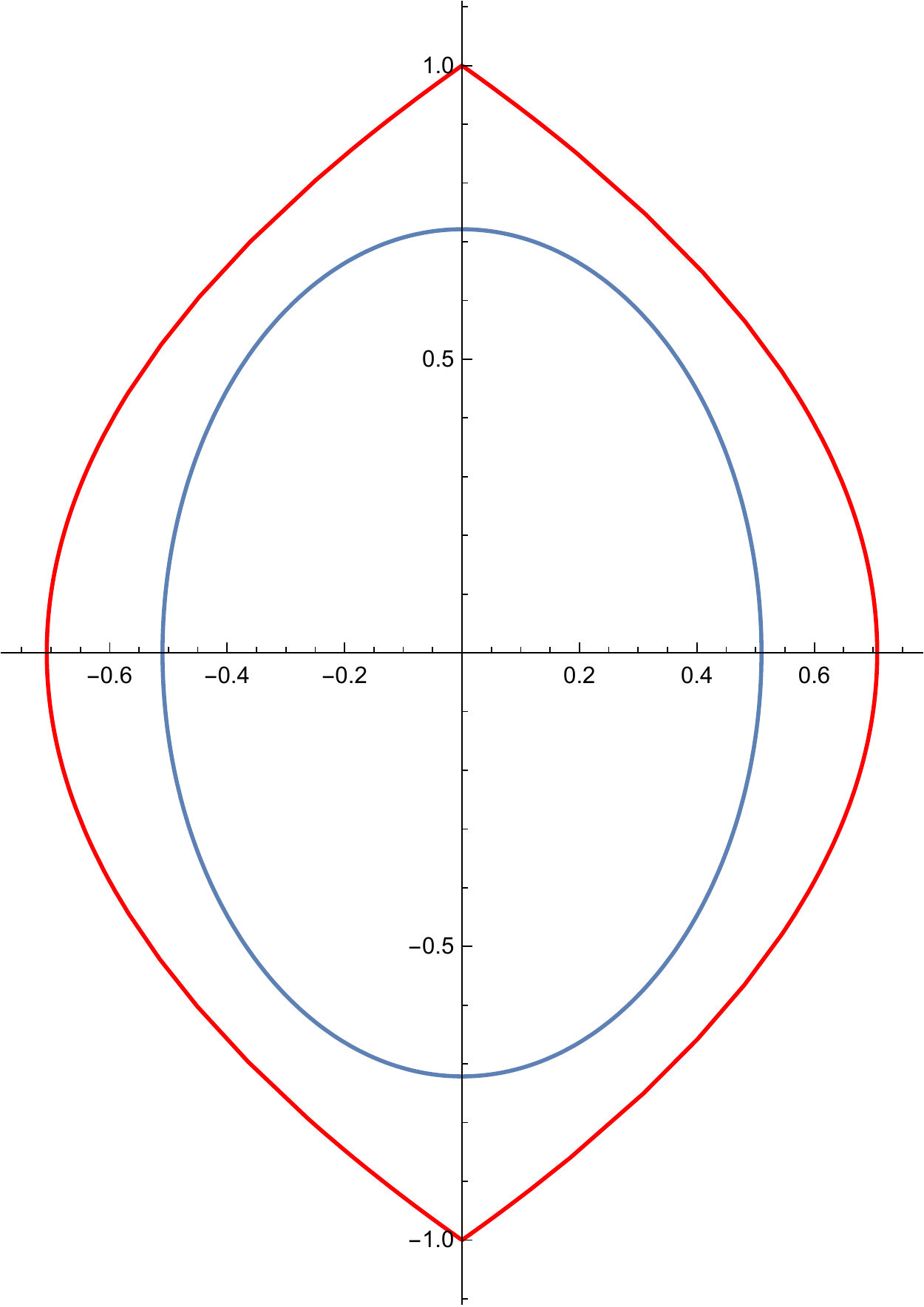}
\caption{Left: For $\te \in (\te_0,\pi/4]$, the section of the cylinder is not a subset of the section of the double cone. Right: The section of the cylinder has been rotated by $\pi/2$. Here $r=0.51$, $\te=\pi/4$. }
\label{rot90}
\end{figure}

We have that  $\widetilde{ \rho}_{C_{\te}}(u) <  \rho_{K_\te}(u)$, for every $u \in [0,\pi/2]$, $\te \in (\te_0,\pi/4]$. The crucial observation is that for fixed $u$, $ \widetilde{\rho}_{C_\te}(u)$ is an increasing 
function of $\te\in [0,\pi/4]$, while $ \rho_{K_\te}(u)$ is decreasing. It is enough, therefore, to show that  $ \widetilde{\rho}_{C_{\pi/4}}(u) <  \rho_{K_{\pi/4}}(u)$  for  $u \in [0,\pi/2]$. Figure \ref{rot90} shows this situation for $r=0.51$. %The calculations are in the Appendix.

\begin{figure}[h!]
\centering
\includegraphics[scale=.45,page=1]{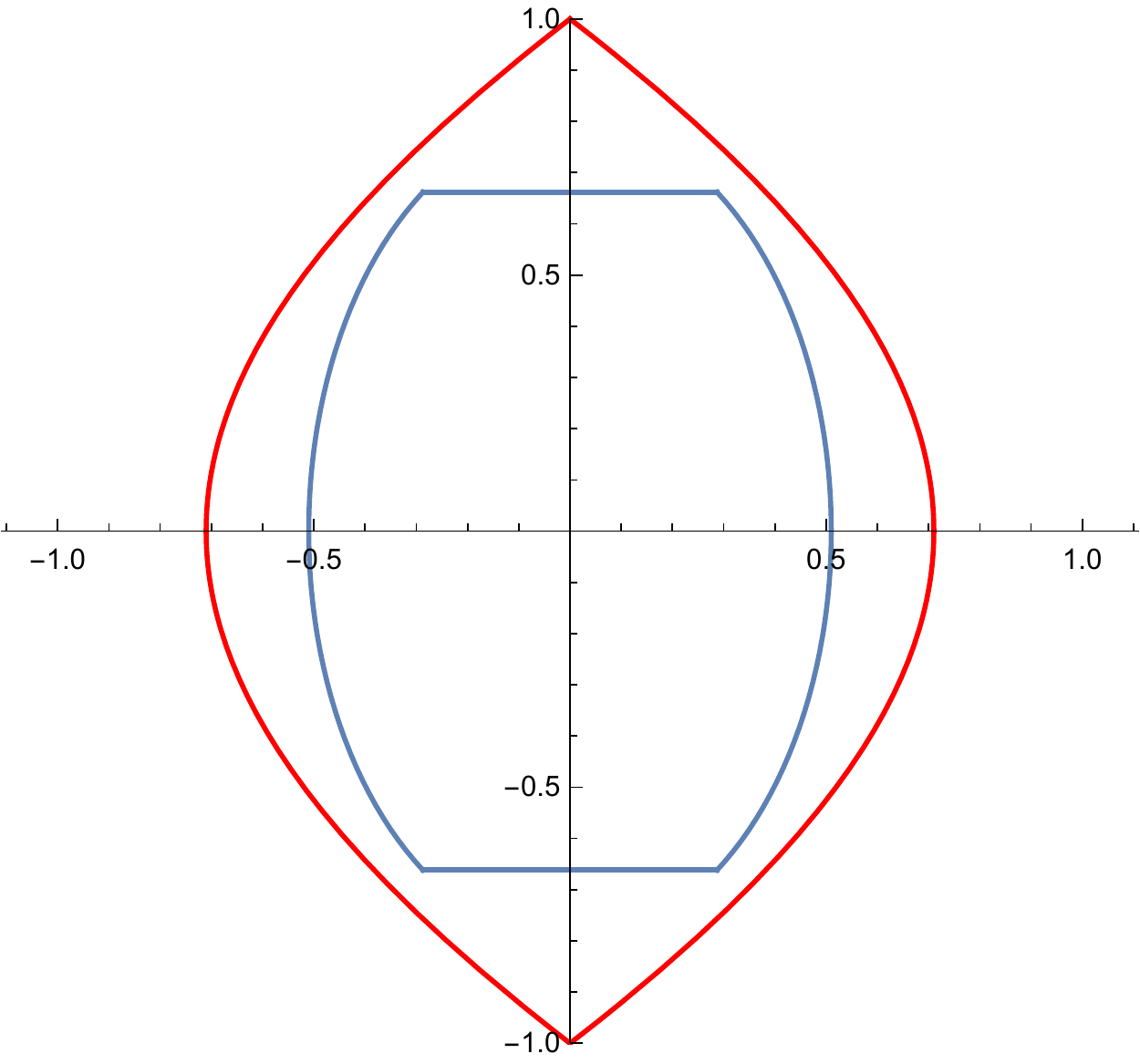}
\includegraphics[scale=.45,page=1]{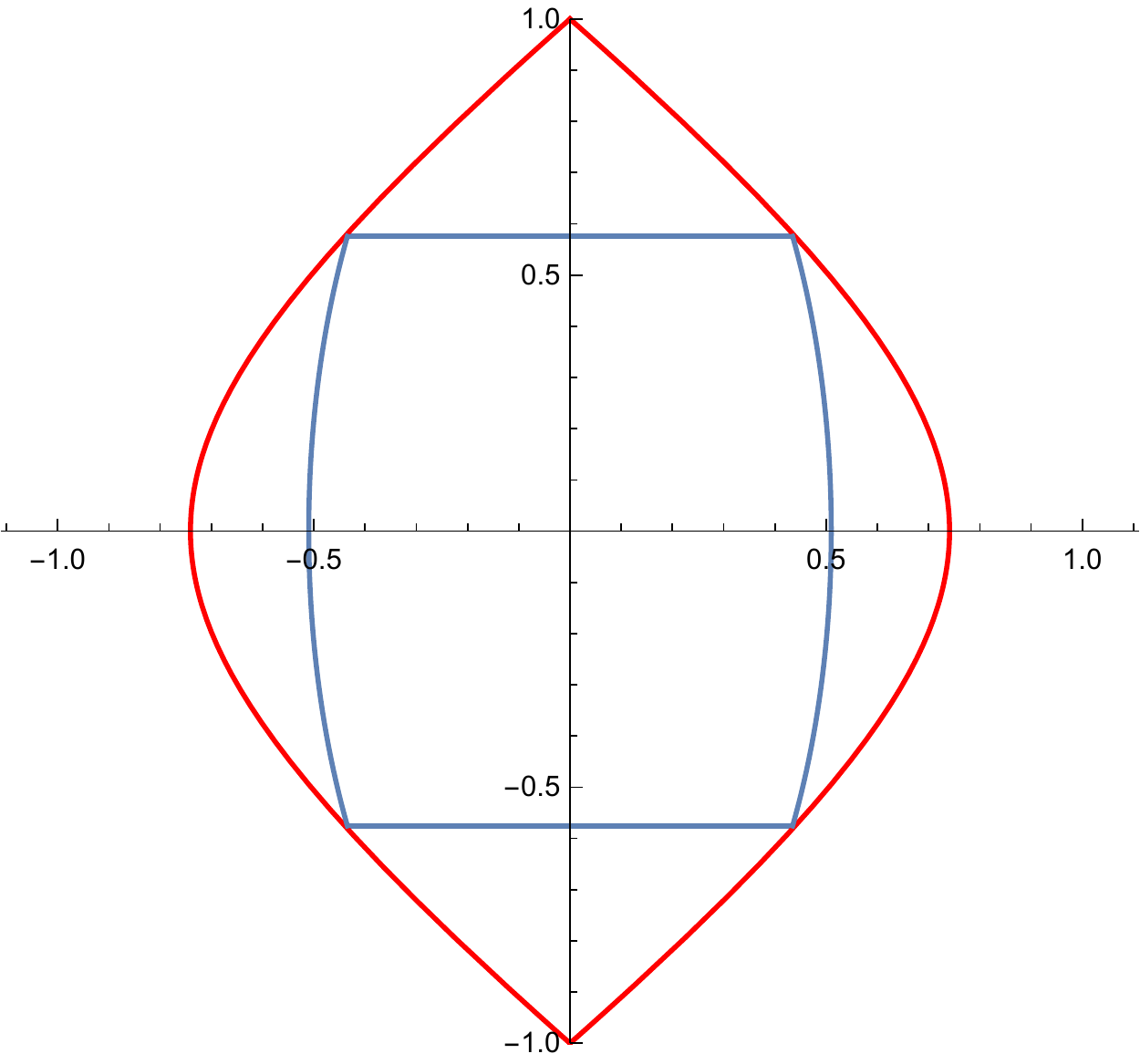}
\caption{Section of the double cone and the $\pi/2$ rotation of the section of the cylinder for $r=0.51$. For the left figure, $\te \in  (\pi/4,\te_1)$; for the right figure $\te=\te_1$. }
\label{3secs}
\end{figure}

\begin{figure}[h!]
\centering
\includegraphics[scale=.45,page=1]{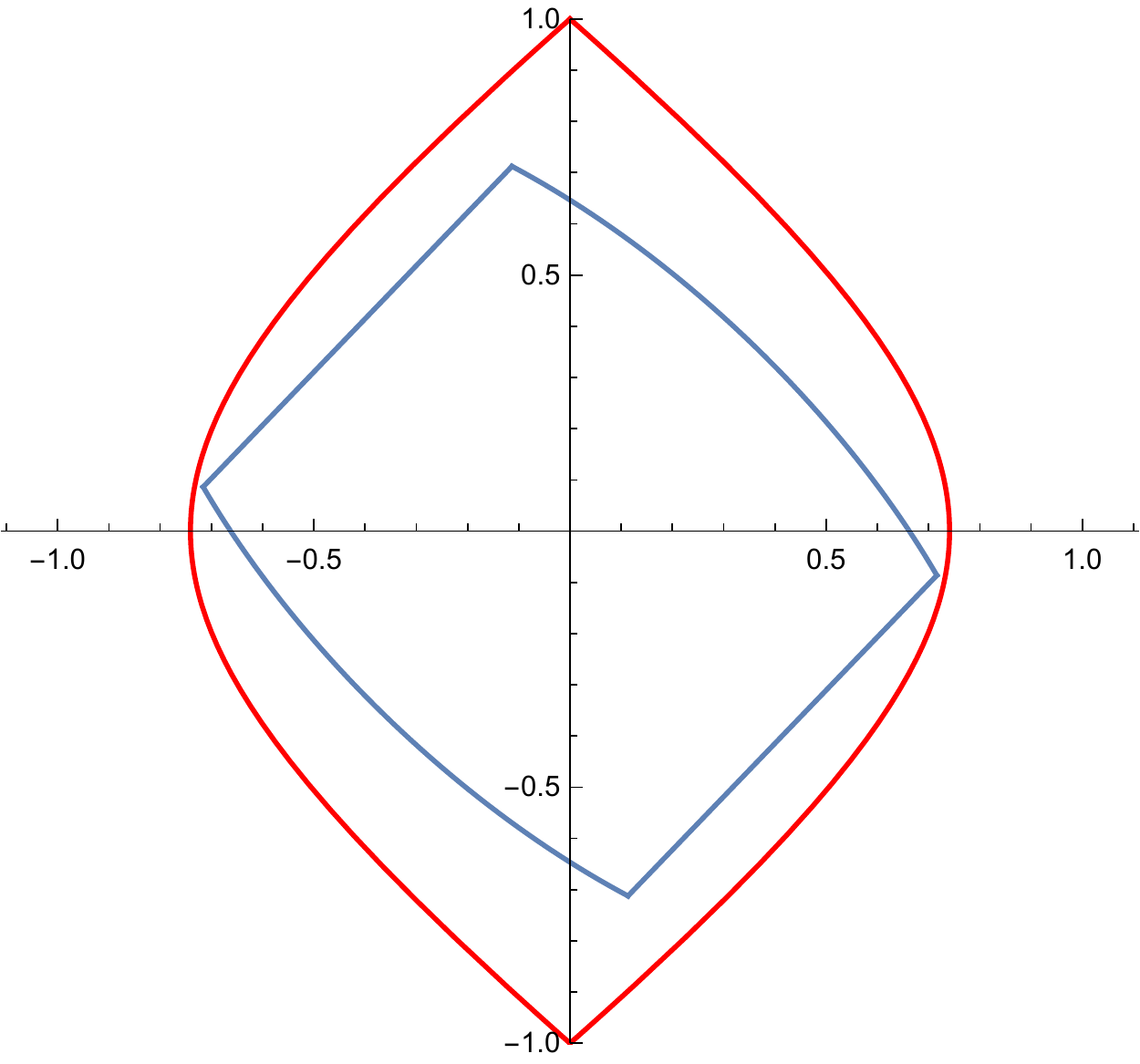}
\includegraphics[scale=.45,page=1]{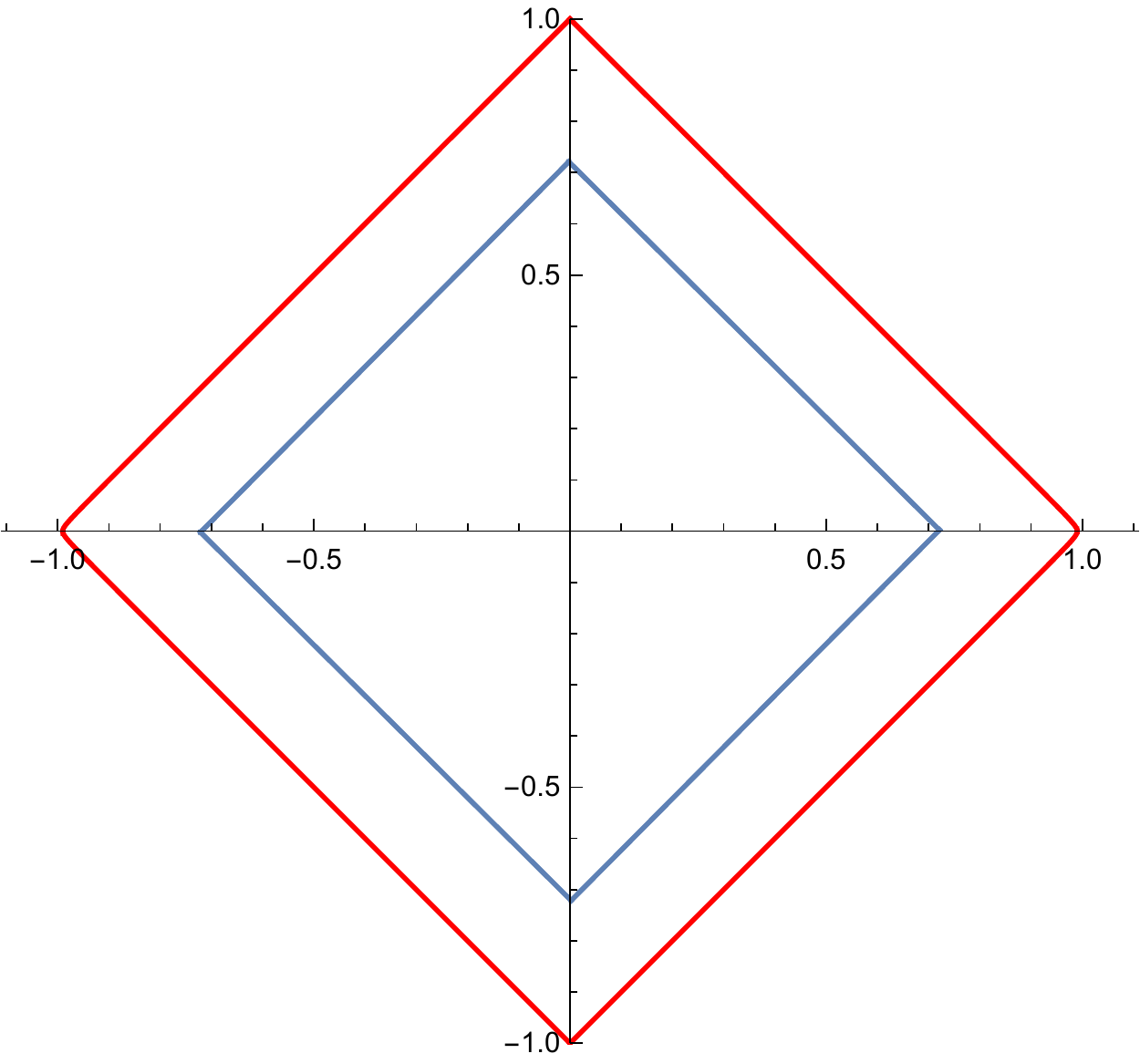}
\caption{In both figures, $r=0.51$. The left figure shows the same sections as Figure \ref{3secs} (right), but the section of the cylinder has been rotated by $\pi/4$. The right figure shows the case where $\te=\pi/2$.}
\label{45d}
\end{figure}

When $\te \in (\pi/4,\pi/2]$, the section $C \cap \xi_\te^\perp$ is never contained in $K  \cap \xi_\te^\perp$, but if $r< \sqrt{2-\sqrt{3}}$,  there exist angles $\te_1, \te_2 \in (\pi/4,\pi/2)$, with $\te_2\leq \te_1$, such that for $\te \in (\pi/4,\te_1]$ a  rotation by $\pi/2$ of the section of the cylinder is contained in the section of the double cone, and for $\te \in[\te_2,\pi/2]$ a rotation by the angle $u_0$ of the section of the cylinder is contained in the section of the double cone. The idea  is that when $\te=\pi/4$, the  $\pi/2$ rotation of the section $C \cap \xi_{\pi/4}^\perp$ is strictly contained  within $K  \cap \xi_{\pi/4}^\perp$, which implies the same, by continuity, for $\te$ in some interval $(\pi/4,\te_1]$;  on the other hand, when  $\te=\pi/2$ and both sections are squares, a rotation by $\pi/4=u_0(\pi/2)$ of $C \cap \xi_{\pi/4}^\perp$ is strictly included in $K  \cap \xi_{\pi/4}^\perp$, and by continuity the same is true on some interval $[\te_2,\pi/2]$. The calculations in the Appendix % \ref{cal-cylcone}
 show that for $r \in (1/2,  \sqrt{2-\sqrt{3}})$, $\te_2 \leq \te_1$ and hence all sections of $C$ can be rotated to fit within the corresponding sections of $K$. Figures  \ref{3secs} and  \ref{45d} illustrate both cases.

\subsection{Counterexample in $\mathbb{R}^n$}

%The idea for this counterexample is due to Kuzminykh and Nazarov. 
Given the unit sphere in $\mathbb{R}^n$,  we will perturb it by adding bump functions to create two convex bodies $K$ and $L$. We place the bumps on $K$ so that they form a simplex on the surface of $K$, but no such simplex configuration of bumps will appear on the surface on $L$, thus impeding that $K$ may be rotated to fit inside of $L$. On the other hand, every hyperplane section of $K$ will be contained in the corresponding section of $L$ after a rotation. We thus obtain a counterexample for \ref{prob3}$(a)$. By polarity, the bodies $L^*$ and $K^*$ provide a counterexample for Problem \ref{prob2}$(a)$.

\begin{figure}[h!]
\centering
\includegraphics[scale=.7,page=1]{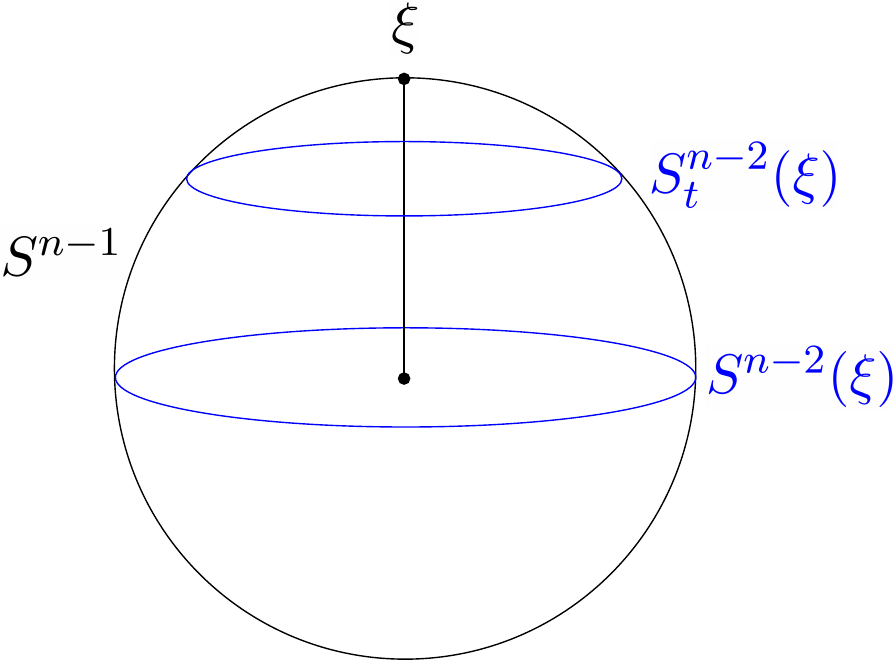}
\caption{}
\label{parallels}
\end{figure}

Given $\xi \in S^{n-1}$, the great $(n-2)$-dimensional subsphere of $S^{n-1}$ that is orthogonal to $\xi$ will be denoted by $S^{n-2}(\xi)=\{ \theta \in S^{n-1}: \theta \cdot \xi =0\}$. For $t\in [-1,1]$, the  subsphere that is parallel to $S^{n-2}(\xi)$ and is located at height $t$ will be denoted by $S^{n-2}_t(\xi)$ (see Figure \ref{parallels}).

The radial function of the unit sphere is the constant function 1. We consider a smooth bump function $\varphi_{\xi,\delta}$ defined on $S^{n-1}$, supported in a small disk $D_\xi$ on the surface of $S^{n-1}$ with center at $\xi \in S^{n-1}$ and with radius $\delta$. The function $\varphi_{\xi,\delta}$ is invariant under rotations that fix the direction $\xi$,  and its maximum height at the point $\xi$ is  1. For $u \in S^{n-1}$, the body whose radial function is $1+\varepsilon \, \varphi_{\xi,\delta}(u)$ is convex, since its curvature will be positive provided that $\varepsilon$ is small enough (see, for example, \cite[page 267]{GRYZ}).  

The first body  $K$ is defined to be the unit sphere with $n$ bumps placed on the surface, so that their centers $\xi_j$, $j=1,\ldots,n$ form a regular spherical simplex.  We assume that the vertex $\xi_1$ is the north pole, and that $v<\frac{4^{-n}}{10^3}$ is the spherical distance between the vertices of the simplex. The radial function of $K$ is 
\[
    1+\sum_{j=1}^n  \frac{\varepsilon}{10^3} \varphi_{\xi_j,\delta}(u), \; {\mbox{ for }}  u\in S^{n-1},
\]
{\it i.e.},  each bump is supported on a disk with center $\xi_j$ and radius $\delta$ (to be chosen later), and has height $\frac{\varepsilon}{10^3}$, where $\varepsilon$ is small enough so that $K$ will be convex.  Given any two vertices of the simplex $\xi_i$ and $\xi_j$, with  $i \neq j$, consider the lune formed by the union of all $(n-2)$-dimensional great spheres passing through any two points $x \in D_{\xi_i}$ and $y \in D_{\xi_j}$.  Let $a$  be the maximum width of the lune. If we choose  $\delta = v^4$, it follows that $a \approx v^3$ (see Figure \ref{lune}).

\begin{figure}[h!]
\centering
\includegraphics[scale=.7,page=1]{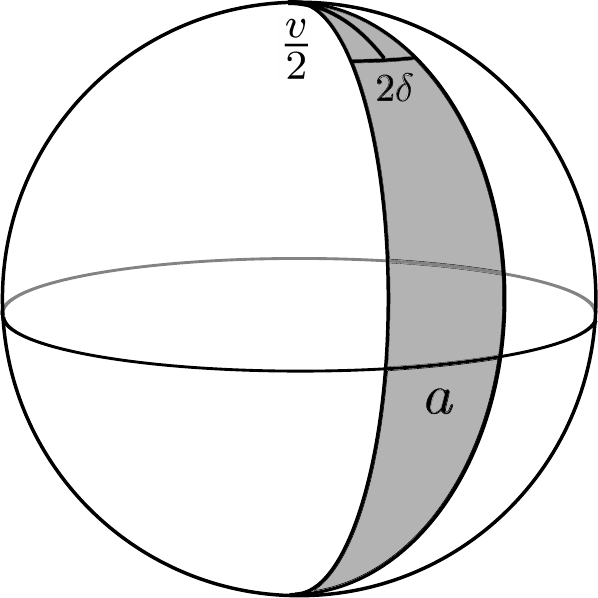}
\caption{}
\label{lune}
\end{figure}

We define $L$ to be the unit sphere with bumps placed on the surface in the following way.  On every point $\xi_j\neq \xi_1$, ({\it i.e.}, not on the north pole), 
we place a bump function of height $\frac{\varepsilon}{10^3}$ and radius  $\delta$. 
Notice that these are the values of the height and radius of the bumps on $K$. 
  Thus, by construction, any section of $K$ that does not pass through the bump at the north pole, is automatically contained in the corresponding section of $L$. 

\begin{figure}[h!]
\centering
\includegraphics[scale=.7,page=1]{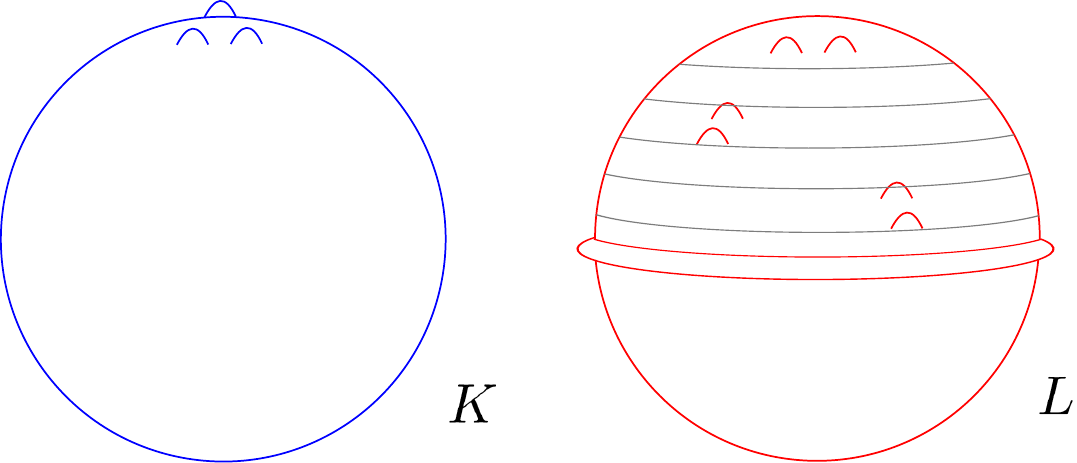}
\caption{Placement of the bumps on $K$ and $L$ for $n=3$.}
\label{bumps}
\end{figure}

We now consider a section of $K$ that passes through the bump at the north pole. We will need to place bumps on $L$ in such a way that the section of $K$ can be rotated to be included in the corresponding section of $L$. For this purpose, we split the top half of the sphere into $2^{n}$ layers. For $k=1,\ldots, 2^n-1$, the $k$-th layer $L_k$ is the spherical ring placed between the parallels $S^{n-2}_{t_{k-1}}(\xi_1)$ and $S^{n-2}_{t_k}(\xi_1)$, where $t_k=\frac{k}{2^{n}}$. The top layer is the spherical cap centered at the north pole, and above the parallel $S^{n-2}_{t_{2^n-1}}(\xi_1)$. Observe that the $(n-1)$ bumps we have already placed are all on the top layer, since $\delta<v$ and $v<\frac{4^{-n}}{10^3}$, while the spherical radius of the top layer is $\frac{1}{2}\arccos(1-\frac{1}{2^n}) \approx \frac{2}{\sqrt{2^{n+1}}}$. 
For every odd $k$, the layer $L_k$  will remain empty of bumps.  For each $2 \leq j \leq n-1$, and for each configuration of $j$ vertices of the simplex in $K$, one of which is the north pole, we place on a layer $L_k$ with $k$ even an identical configuration of vertices ({\it i.e.} a rotation of the original configuration into $L_k$, see Figure \ref{bumps}). On each vertex $x$ we place the bump function $\varepsilon \varphi_{x,\widetilde{\delta}}$, where $\widetilde{\delta}=v^2$. The definition of  $v$ guarantees that the layers are wide enough to contain each configuration of bumps, and also that the larger bumps do not overlap, since $2\widetilde{\delta}<v$.  

Since $\widetilde{\delta}>>a$,  every section of $K$ that intersects $j$ of the bumps %(not necessarily at the center points)
 will be contained after a rotation in the corresponding section of $L$.  On the other hand, since $\widetilde{\delta}<<v$, no layer can contain $n$ bumps of smaller height $\varepsilon/10^3$ placed in the shape of  the original simplex on $K$.

Finally, we define a function  $\psi_{\xi_1}$ as the function obtained by  sliding  $\frac{\varepsilon}{10^3} \varphi$ around the equator  $S^{n-2}(\xi_1)$  of $L$.  This guarantees that every section of $K$ that passes  through the north pole and no other bump on $K$ can be rotated by the angle $\pi/2$ into the corresponding section of $L$. This concludes the $n$-dimensional counterexample.

\section{Sections, projections, and volumes}
\label{vols}

The counterexamples to Problem \ref{prob2}$(a)$  presented in Section \ref{3d} do not provide a negative answer to Problem \ref{prob2}$(b)$, as in both cases the body with the larger projections also has the larger volume. In fact, Theorem \ref{MM1} below shows that  our assumptions on the projections of $K$ and  $L$ only imply that the volume of the polar body $K^*$ is larger than the volume of $L^*$, but gives us no relation between the volumes of $K$ and $L$. On the other hand, the answer to Problem \ref{prob3}$(b)$ is affirmative: One can obtain the desired relation $vol_n(K) \leq vol_n(L)$ if the {\it sections} of $K$ are assumed to fit into the corresponding sections of $L$ after rotation. This is proved in the following Theorem.

\begin{theorem}\label{MM}
Let $K$ and $L$ be two  star bodies in ${\mathbb R^n}$, $n\ge 2$, such that for every $\xi\in S^{n-1}$, there exists a rotation $\varphi_{\xi}\in SO(n-1,\xi^{\perp})$ such that 
\[
\varphi_{\xi}(K\cap\xi^{\perp})\subseteq L\cap\xi^{\perp}.
\]
Then, 
\begin{equation}\label{mmm}
\textrm{vol}_{n}(K)\le \textrm{vol}_{n}(L).
\end{equation}
\end{theorem}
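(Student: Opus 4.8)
The plan is to translate the hypothesis into a pointwise inequality between radial functions on each great $(n-2)$-subsphere and then integrate — but, crucially, raising that inequality to the \emph{$n$-th} power rather than the $(n-1)$-st. The naive route (a rotation of $\xi^{\perp}$ preserves $(n-1)$-volume, so $\mathrm{vol}_{n-1}(K\cap\xi^{\perp})\le\mathrm{vol}_{n-1}(L\cap\xi^{\perp})$ for all $\xi$) only lands on a Busemann--Petty-type statement, which is false for $n\ge 5$; the rotational containment must therefore be exploited more efficiently, and the extra efficiency comes exactly from being free to choose the exponent.

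First I would rewrite the hypothesis in radial terms. Since $\rho_{K\cap\xi^{\perp}}=\rho_{K}$ on $S^{n-1}\cap\xi^{\perp}$ and, by \eqref{rotsrho}, $\rho_{\varphi_{\xi}(K\cap\xi^{\perp})}(u)=\rho_{K\cap\xi^{\perp}}(\varphi_{\xi}^{-1}u)$, the containment $\varphi_{\xi}(K\cap\xi^{\perp})\subseteq L\cap\xi^{\perp}$ is equivalent to
\[
\rho_{K}(\varphi_{\xi}^{-1}u)\ \le\ \rho_{L}(u)\qquad\text{for every }u\in S^{n-1}\cap\xi^{\perp}.
\]
Raising this to the power $n$, integrating over the $(n-2)$-sphere $S^{n-1}\cap\xi^{\perp}$ with its surface measure, and substituting $u\mapsto\varphi_{\xi}u$ in the left-hand integral — which is legitimate because $\varphi_{\xi}\in SO(n-1,\xi^{\perp})$ maps $S^{n-1}\cap\xi^{\perp}$ onto itself and preserves its $(n-2)$-dimensional measure — yields
\[
\int_{S^{n-1}\cap\xi^{\perp}}\rho_{K}(u)^{n}\,du\ \le\ \int_{S^{n-1}\cap\xi^{\perp}}\rho_{L}(u)^{n}\,du\qquad\text{for every }\xi\in S^{n-1}.
\]

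Next I would integrate both sides over $\xi\in S^{n-1}$ and interchange the order of integration. For a fixed $u\in S^{n-1}$ the set $\{\xi\in S^{n-1}:\xi\perp u\}$ is again a great $(n-2)$-subsphere, of total measure $|S^{n-2}|$ independent of $u$; hence Fubini on the flag manifold $\{(\xi,u):u\perp\xi\}$ gives $\int_{S^{n-1}}\bigl(\int_{S^{n-1}\cap\xi^{\perp}}\rho_{K}(u)^{n}\,du\bigr)\,d\xi=|S^{n-2}|\int_{S^{n-1}}\rho_{K}(u)^{n}\,du$, and likewise for $L$. (This is just the self-adjointness of the spherical Radon transform applied to the constant function $1$.) Combining this with the previous display and with the elementary polar-coordinate identity $\mathrm{vol}_{n}(M)=\tfrac1n\int_{S^{n-1}}\rho_{M}(u)^{n}\,du$, valid for star bodies, gives $\mathrm{vol}_{n}(K)\le\mathrm{vol}_{n}(L)$.

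There is no serious obstacle in this argument; the only points requiring care are the bookkeeping for the surface measures on the various $(n-2)$-subspheres and the measurability needed for Fubini (immediate, since radial functions of star bodies are continuous), together with the conceptual observation that one must exponentiate to the power $n$ so that the $\xi$-integration reproduces the volume integral. It is worth noting that the same scheme fails for the projection version: $\mathrm{vol}_{n}$ is not a pure power-integral of the support function $h_{K}$, so applying the argument to $h_{K}$ via \eqref{radsup} only produces the polar relation $\mathrm{vol}_{n}(K^{*})\ge\mathrm{vol}_{n}(L^{*})$ mentioned in the introduction, which is why Problem~\ref{prob2}$(b)$ remains open.
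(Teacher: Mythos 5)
Your argument is correct and is essentially the paper's own proof: the same passage from the containment to the pointwise inequality $\rho_K(\varphi_\xi^{-1}u)\le\rho_L(u)$ on $S^{n-1}\cap\xi^\perp$, the same $n$-th power integration with rotation invariance, the same averaging over $\xi$ with Fubini, and the same polar volume formula. Your closing remark about why the scheme only yields the polar inequality in the projection case likewise matches the paper's Theorem~\ref{MM1}.
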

\begin{proof}
By hypothesis, for every  $\xi\in S^{n-1}$ there exists a rotation $\varphi_{\xi}\in SO(n-1,\xi^{\perp})$ such that 
\[ \rho_{\varphi_{\xi}(K\cap\xi^{\perp})}(\theta)\le \rho_{L\cap\xi^{\perp}}(\theta)\quad\forall\theta\in \xi^{\perp}.
\]
By (\ref{rotsrho}), this is equivalent to 
\[
 \rho_K(\varphi^t_{\xi}(\theta))\le \rho_{L}(\theta)\quad\forall\theta\in \xi^{\perp}.
 \]
Raising to the power $n$, integrating, and using  the rotation invariance of the Lebesgue measure, we obtain
\[
\int\limits_{\xi^{\perp}\cap S^{n-1}}\rho_K^n(\varphi^t_{\xi}(\theta))d\theta=\int\limits_{\xi^{\perp}\cap S^{n-1}}\rho_K^n(\theta)d\theta\le
 \int\limits_{\xi^{\perp}\cap S^{n-1}}\rho_L^n(\theta)d\theta.
\]
Averaging over the unit sphere, we have
$$
\int\limits_{ S^{n-1}}d\xi\int\limits_{\xi^{\perp}\cap S^{n-1}}\rho_K^n(\theta)d\theta\le\int\limits_{ S^{n-1}}d\xi\int\limits_{\xi^{\perp}\cap S^{n-1}}\rho_L^n(\theta)d\theta.
$$
Finally, using Fubini's Theorem and the formula for the volume in terms of the radial function (see \cite[page 16]{K})
\begin{equation}
 \label{vol}
vol_{n}(K)=\frac{1}{n}\int\limits_{ S^{n-1}}\rho^n_K(\theta)d\theta,
\end{equation}
we obtain the result.
\end{proof}

For the next theorem we use the standard notation $int(K)$ for the interior of $K$.  The proof is similar to that of Theorem \ref{MM}.

\begin{theorem}\label{MM1}
Let $K$ and $L$ be two  convex bodies in ${\mathbb R^n}$, $n\ge 2$, such that $0\in int(K)\cap int(L)$, and  for every $\xi\in  S^{n-1}$, there exists a rotation $\varphi_{\xi}\in SO(n-1,\xi^{\perp})$ such that 
\[
\varphi_{\xi}(K|\xi^{\perp})\subseteq L|\xi^{\perp}.
\]
Then, 
\[
\textrm{vol}_{n}(K^*)\ge \textrm{vol}_{n}(L^*).
\]
\end{theorem}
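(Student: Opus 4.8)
The plan is to mimic the proof of Theorem~\ref{MM}, but working with the polar bodies and using the section/projection duality \eqref{secproj}. First I would fix $\xi \in S^{n-1}$ and record the hypothesis $\varphi_\xi(K|\xi^\perp) \subseteq L|\xi^\perp$ in terms of support functions: by \eqref{rots} this says $h_{K|\xi^\perp}(\varphi_\xi^t \theta) \le h_{L|\xi^\perp}(\theta)$ for all $\theta \in \xi^\perp \cap S^{n-1}$, and since $h_{K|S} = h_K$ on $S$, this is $h_K(\varphi_\xi^t\theta) \le h_L(\theta)$ on $\xi^\perp \cap S^{n-1}$. Taking reciprocals (both sides are positive because $0 \in \operatorname{int}(K) \cap \operatorname{int}(L)$) and invoking \eqref{radsup}, I get $\rho_{K^*}(\varphi_\xi^t \theta) \ge \rho_{L^*}(\theta)$ for every $\theta \in \xi^\perp \cap S^{n-1}$. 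Equivalently, via \eqref{secproj} this is exactly the statement that a rotation of the section $L^* \cap \xi^\perp$ is contained in the section $K^* \cap \xi^\perp$ — so the hypothesis has been transformed into the hypothesis of Theorem~\ref{MM} with the roles of the two bodies swapped and $K,L$ replaced by $L^*, K^*$.

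From here I would essentially invoke Theorem~\ref{MM}: raise the inequality $\rho_{K^*}(\varphi_\xi^t\theta) \ge \rho_{L^*}(\theta)$ to the $n$-th power, integrate over $\theta \in \xi^\perp \cap S^{n-1}$, use rotation invariance of the $(n-1)$-dimensional spherical measure to drop the $\varphi_\xi^t$, average over $\xi \in S^{n-1}$, apply Fubini, and finish with the volume formula \eqref{vol} applied to $K^*$ and $L^*$. This yields $\operatorname{vol}_n(K^*) \ge \operatorname{vol}_n(L^*)$. Since the argument is word-for-word that of Theorem~\ref{MM} after the reduction, I would keep this part brief and just point to that proof, as the paper's remark ("The proof is similar") suggests.

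The only genuine content — and the only step needing care — is the reduction in the first paragraph: verifying that projections of $K,L$ being nested up to rotation is equivalent, under polarity, to sections of $K^*, L^*$ being nested up to rotation (with reversed containment). The two facts making this work are the identity $\rho_{K^*} = 1/h_K$ on the sphere and the fact that a rotation $\varphi_\xi \in SO(n-1,\xi^\perp)$ acts on $\xi^\perp$, commutes with restriction to $\xi^\perp$, and its transpose is again such a rotation; one should also note that $h_K$ restricted to $\xi^\perp \cap S^{n-1}$ is the support function of the projection $K|\xi^\perp$ computed within $\xi^\perp$, which is where $0 \in \operatorname{int} K$ is used to keep everything positive. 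I do not anticipate a real obstacle: the duality between sections and projections is already packaged in \eqref{secproj}, and the measure-theoretic core is entirely parallel to Theorem~\ref{MM}; the main thing is simply to state the polarity reduction cleanly so that the inequality comes out in the correct direction.
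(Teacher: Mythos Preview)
Your proposal is correct and follows essentially the same route as the paper's proof: translate the projection containment into $h_K(\varphi_\xi^t\theta)\le h_L(\theta)$ on $\xi^\perp$, pass to radial functions of the polars via \eqref{radsup} to get $\rho_{K^*}(\varphi_\xi^t\theta)\ge\rho_{L^*}(\theta)$, then integrate exactly as in Theorem~\ref{MM}. The paper does not explicitly frame the reduction through \eqref{secproj} as ``apply Theorem~\ref{MM} to $L^*\subset K^*$'', but the computation is line-for-line the same.
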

\begin{proof}
By hypothesis,  for every  $\xi\in S^{n-1}$ there exists a rotation $\varphi_{\xi}\in SO(n-1,\xi^{\perp})$ such that 
\[
  h_{\varphi_{\xi}(K|\xi^{\perp})}(\theta)\le h_{L|\xi^{\perp}}(\theta)\quad\forall\theta\in \xi^{\perp}.
\]
By (\ref{radsup}) and (\ref{rots}), this is equivalent to 
\[
 \rho_{K^*}(\varphi^t_{\xi}(\theta))\ge \rho_{L^*}(\theta)\quad\forall\theta\in \xi^{\perp}.
\]
Raising to the power $n$, integrating, and using  the rotation invariance of the Lebesgue measure, we obtain
\[
\int\limits_{\xi^{\perp}\cap S^{n-1}}\rho_{K^*}^n(\varphi^t_{\xi}(\theta))d\theta=\int\limits_{\xi^{\perp}\cap S^{n-1}}\rho_{K^*}^n(\theta)d\theta\ge
 \int\limits_{\xi^{\perp}\cap S^{n-1}}\rho_{L^*}^n(\theta)d\theta.
\]
Averaging over the unit sphere  we have
$$
\int\limits_{ S^{n-1}}d\xi\int\limits_{\xi^{\perp}\cap{ S^{n-1}}}\rho_{K^*}^n(\theta)d\theta\ge\int\limits_{{ S^{n-1}}}d\xi\int\limits_{\xi^{\perp}\cap{ S^{n-1}}}\rho_{L^*}^n(\theta)d\theta.
$$
Finally, using Fubini's Theorem and (\ref{vol}), we obtain the desired result.
\end{proof}

In order to obtain a positive answer to Problem \ref{prob2}$(b)$, we need to impose additional conditions on the bodies $K$ and $L$. We do this in Theorem \ref{countable}, following ideas of Hadwiger \cite{Ha},  by assuming the existence of a diameter $d_K(\xi_0)$ of $K$ in the direction $\xi_0$, such that the hypotheses of Problem \ref{prob2} hold on every plane that contains that diameter.  For $w \in \xi_0^\perp$, we will call  $K|w^\perp$ (resp. $L|w^\perp$) a {\it side projection} of $K$ (resp. of $L$). See Figure \ref{egg}.

\begin{figure}[h!]
\centering
\includegraphics[scale=.7,page=1]{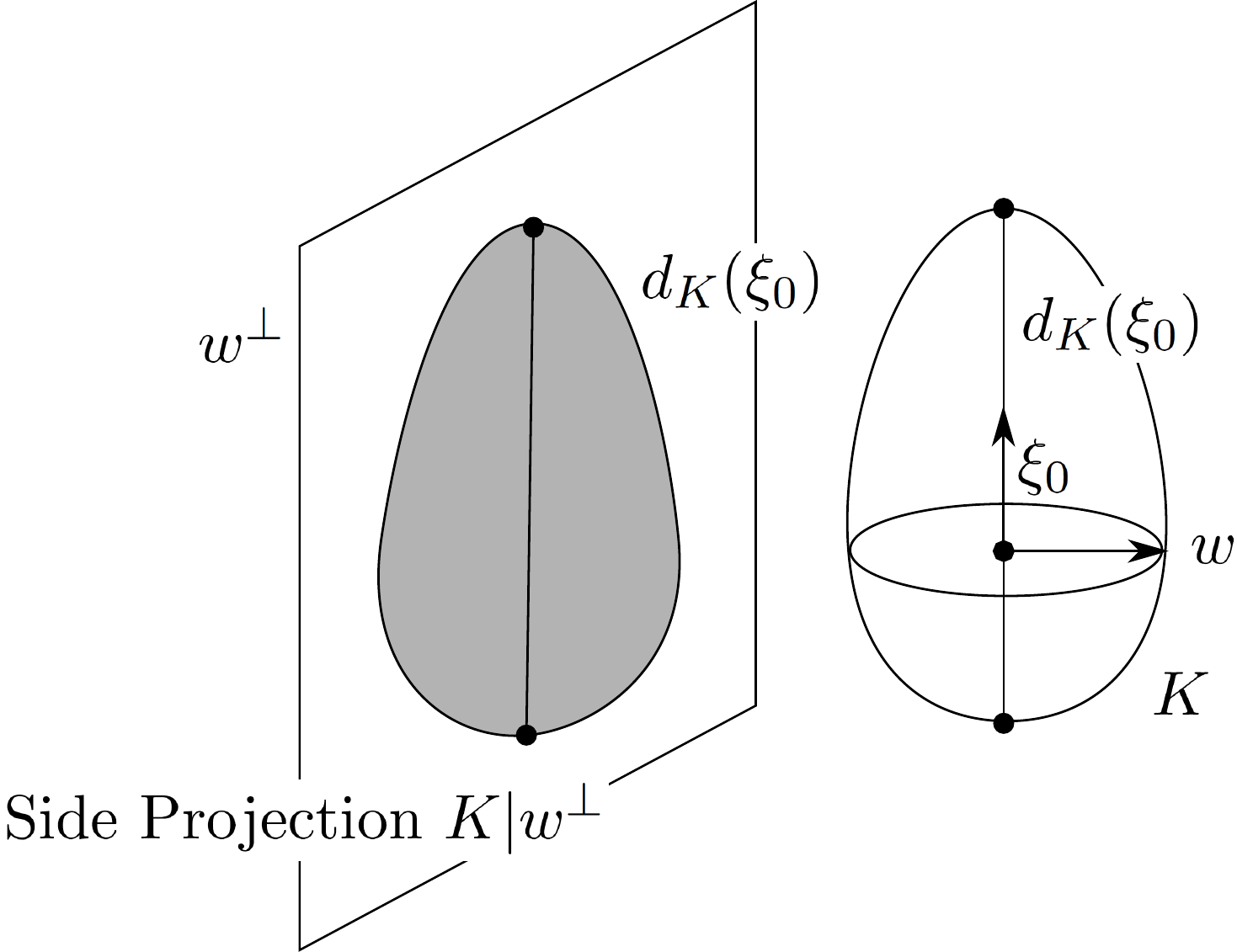}
\caption{}
\label{egg}
\end{figure}

\begin{theorem} \label{countable}
Let $K,L$ be convex bodies in $\mathbb{R}^3$ such that each of $K,L$ have countably many diameters of the same length, {\it i.e.}, $\max\limits_{\{\theta\in S^{n-1}\}}\omega_K(\theta)=\max\limits_{\{\theta\in S^{n-1}\}}\omega_L(\theta)$, and, for each body, the maximum is attained at a countable set of directions.  Assume that there exists a diameter $d_K(\xi_0)$, such that for every $w \in \xi_0^\perp$,  there exists $\varphi_w \in SO(2,w^\perp)$ such that $\varphi_w(K|w^\perp) \subseteq L|w^\perp$, {\it i.e.}, every side projection of $K$ is contained, after a rotation, in the corresponding side projection of $L$.  If either $K$ or $L$ is origin-symmetric then $K \subseteq L$.
\end{theorem}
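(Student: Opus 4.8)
The plan is to prove that $h_K(\theta)\le h_L(\theta)$ for every $\theta\in S^2$, which by the monotonicity of support functions is exactly $K\subseteq L$. Write $D=\max_{\theta}\omega_K(\theta)=\max_{\theta}\omega_L(\theta)$ for the common diameter length. First I would record that \emph{every} side projection inherits this diameter: since $d_K(\xi_0)$ is a segment parallel to $\xi_0$ and $\xi_0\in w^\perp$ for every $w\in\xi_0^\perp\cap S^2$, the diameter segment projects isometrically onto $w^\perp$, so (using $h_{K|w^\perp}=h_K$ on $w^\perp$) $\omega_{K|w^\perp}(\xi_0)=\omega_K(\xi_0)=D$, which is the maximal width of the planar body $K|w^\perp$. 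Since $\varphi_w(K|w^\perp)\subseteq L|w^\perp$, the planar body $L|w^\perp$ contains a segment of length $D$ in the direction $\eta_w:=\varphi_w(\xi_0)$, hence $\omega_L(\eta_w)=\omega_{L|w^\perp}(\eta_w)=D$; thus $\eta_w$ lies in the set of directions where $\omega_L$ attains its maximum.

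The key step, where the assumption that $L$ has only countably many diameters enters, is a dichotomy. Fix a valid $\varphi_w$ for each $w\in\xi_0^\perp\cap S^2$ and set $\eta_w=\varphi_w(\xi_0)$; all these vectors lie in the countable set $\Sigma$ of maximizing directions of $\omega_L$. If two distinct side-planes $w^\perp\ne w'^\perp$ satisfy $\eta_w=\eta_{w'}$, then this common unit vector is orthogonal to $w$, to $w'$, and so is $\xi_0$; since $(\spn\{w,w'\})^\perp$ is one-dimensional in $\mathbb{R}^3$, this forces $\eta_w=\pm\xi_0$. Hence the set of side-planes with $\eta_w\ne\pm\xi_0$ injects into $\Sigma\setminus\{\pm\xi_0\}$ and is therefore countable. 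Call a side-plane $w^\perp$ \emph{good} if $\varphi_w(\xi_0)=\pm\xi_0$; then all but countably many side-planes are good, and on a good plane $\varphi_w$ is either $\mathrm{id}_{w^\perp}$ or the rotation by $\pi$ (which is $-\mathrm{id}_{w^\perp}$), because a planar rotation fixing a unit vector up to sign must be one of these two.

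Next I would invoke the origin-symmetry. On a good plane, taking support functions in $\varphi_w(K|w^\perp)\subseteq L|w^\perp$ gives either $h_K(\theta)\le h_L(\theta)$ for all $\theta\in w^\perp$ (when $\varphi_w=\mathrm{id}$), or $h_K(-\theta)\le h_L(\theta)$ for all $\theta\in w^\perp$ (when $\varphi_w=-\mathrm{id}$). In the second case, if $K$ is origin-symmetric then $h_K(-\theta)=h_K(\theta)$, while if $L$ is origin-symmetric then $h_L(\theta)=h_L(-\theta)$ and one replaces $\theta$ by $-\theta\in w^\perp$; either way $h_K(\theta)\le h_L(\theta)$ for all $\theta\in w^\perp$. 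Thus $h_K\le h_L$ on every good side-plane through $\xi_0$.

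Finally I would close by density and continuity. For $\theta\in S^2$ with $\theta\ne\pm\xi_0$, the plane $\spn\{\theta,\xi_0\}$ is the unique side-plane containing $\theta$; rotating it about the axis $\xi_0$ through a sequence of good angles (possible since the good side-planes are co-countable among all side-planes) and taking in each such plane the unit vector at the same angular distance from $\xi_0$ as $\theta$, one obtains $\theta_k\to\theta$ with $h_K(\theta_k)\le h_L(\theta_k)$, whence $h_K(\theta)\le h_L(\theta)$ by continuity of the support functions; the two directions $\pm\xi_0$ lie in every side-plane, in particular in a good one. Therefore $h_K\le h_L$ on all of $S^2$, so $K\subseteq L$. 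I expect the counting dichotomy of the second paragraph — converting ``$L$ has countably many diameters'' into ``for co-countably many side-planes the fitting rotation fixes the $\xi_0$-axis up to sign'' — to be the crux, this being the Hadwiger-type mechanism; the symmetrization step and the final approximation argument are comparatively routine.
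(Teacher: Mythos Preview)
Your proof is correct and follows essentially the same route as the paper's: on co-countably many side-planes the rotation $\varphi_w$ must send $\xi_0$ to $\pm\xi_0$ and hence be $\pm\mathrm{id}_{w^\perp}$, origin-symmetry then gives $h_K\le h_L$ on each such plane, and continuity of support functions extends this to all of $S^2$. Your injectivity argument (the map $w^\perp\mapsto\eta_w$ is one-to-one on the bad planes, so they inject into the countable set of diameter directions of $L$) is a slight streamlining of the paper's version, which first proves that $\xi_0$ is itself a diameter direction of $L$ and then observes that the bad planes are among the countably many containing some other diameter direction of $K$ or $L$; the underlying mechanism is identical.
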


\begin{proof} 

First we show that $L$ must have a diameter in the  direction $\xi_0$.  If this is not the case, then there exists a plane $H$ that contains $\xi_0$ and none of the directions of the diameters of $L$.  Then $K|H$ contains a diameter of $K$ (namely $d_K(\xi_0)$) and $L|H$ does not contain a diameter of $L$, hence $K|H$ can never be rotated into $L|H$. This contradiction shows that $L$ has a diameter $d_L(\xi_0)$. 

Let $D$  be the countable set of all directions of the diameters of $K$ and  $L$, excluding $\xi_0$. For $w \in \xi_0^\perp$, let $w^\perp$ be a plane containing no direction in $D$ (clearly, $w^\perp$ contains $\xi_0$). Since $\varphi_w(K|w^\perp) \subseteq L|w^\perp$, and $\varphi_w$ is a rotation around the origin that takes the diameter $d_K(\xi_0)$ onto the parallel diameter $d_L(\xi_0)$, it follows that either $\varphi_w$ is the identity, or that both $d_K(\xi_0)$ and $d_L(\xi_0)$ are centered at the origin and $\varphi_w$ is a rotation by $\pi$.  In the first case, we have $K|w^\perp \subseteq L|w^\perp$.  In the second case, we have $-K|w^\perp \subseteq L|w^\perp$. But either $K$ or $L$ is origin-symmetric, and hence we obtain $K|w^\perp \subseteq L|w^\perp$ also in this case. Thus, for every $\theta \in S^2$ such that $\theta \in w^\perp$ and $w^\perp$  does not contain any direction in $D$, we have that $h_K(\theta) \leq h_L(\theta)$.

Let $H_i$ be the plane that contains $\xi_0$ and $\xi_i \in D$, and assume that  $\theta \in S^2 \cap H_i$.  Since there are only countably many such $H_i$'s, we can choose a sequence $\{\theta_j\}$ of points in $S^2$, converging to $\theta$, such that none of the $\theta_j$ are contained in $\displaystyle \cup_{i\geq 1} H_i$.  Hence,  $h_K(\theta_j) \leq h_L(\theta_j)$,  and by  the continuity of the support function, $h_K(\theta) \leq h_L(\theta)$.  We have thus proven that  $K \subseteq L$.
\end{proof}

\br
A more general version of Theorem \ref{countable} can be proven: With the same hypotheses for the diameters of $K,L$, if every projection $K|w^\perp$ can be rotated {\it and translated} to be included in $L|w^\perp$, and either $K$ or $L$ is centrally symmetric, then $K $ is contained in a translate of $L$ (the argument is similar to that in the proof of \cite[Lemma 14]{ACR}).
\er

We finish this section with two related results.

\begin{lemma}
  Let  $K,L$ be two convex bodies in $\mathbb{R}^3$, such that 
$$
\forall\xi\in { S^{2}}, \quad \exists \varphi_{\xi}\in SO(2,\xi^{\perp}) {\mbox{ such that }} \varphi_{\xi}(K|\xi^{\perp})\subseteq L|\xi^{\perp}, 
$$
and
$$
\int_{S^{2}} h_K =\int_{S^{2}} h_L.
$$
Then $K \subseteq \pm L$.  
\end{lemma}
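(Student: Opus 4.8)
The plan is to turn the projection hypothesis into an inequality between support functions, extract an \emph{equality} from it by integrating over great circles (this is where the hypothesis $\int_{S^2}h_K=\int_{S^2}h_L$ enters, via the spherical Radon transform), and then use a diameter argument as in the proof of Theorem~\ref{countable}. Concretely, since $h_{K|\xi^{\perp}}=h_K$ on $S^2\cap\xi^{\perp}$ and by (\ref{rots}), the hypothesis is equivalent to: for every $\xi\in S^2$ there is $\varphi_{\xi}\in SO(2,\xi^{\perp})$ with $h_K(\varphi_{\xi}^{t}\theta)\le h_L(\theta)$ for all $\theta\in S^2\cap\xi^{\perp}$. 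Integrating this over the great circle $S^2\cap\xi^{\perp}$ and using the rotation–invariance of arc length, the left side equals $\int_{S^2\cap\xi^{\perp}}h_K(\theta)\,d\theta$; since a great circle is centrally symmetric this is $\tfrac12\int_{S^2\cap\xi^{\perp}}\omega_K(\theta)\,d\theta=\tfrac12 R\omega_K(\xi)$, where $R$ denotes the spherical Radon (Funk) transform. Hence $R\omega_K\le R\omega_L$ pointwise on $S^2$.

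Next I would feed in the second hypothesis. Since $\int_{S^2}h_K=\tfrac12\int_{S^2}\omega_K$, the assumption gives $\int_{S^2}\omega_K=\int_{S^2}\omega_L$. The transform $R$ is self-adjoint and $R\mathbf 1=2\pi\,\mathbf 1$ (equivalently, by Fubini, $\int_{S^2}Rf\,d\xi=2\pi\int_{S^2}f\,d\theta$), so $\int_{S^2}R\omega_K=\int_{S^2}R\omega_L$. Together with the pointwise inequality $R\omega_K\le R\omega_L$ and the continuity of $\omega_K,\omega_L$, this forces $R\omega_K=R\omega_L$ on $S^2$. Since $\omega_K,\omega_L$ are even and continuous, the injectivity of the spherical Radon transform on even continuous functions (Funk--Minkowski) yields $\omega_K=\omega_L$.

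Now the integral inequality over each great circle $S^2\cap\xi^{\perp}$ is in fact an equality, and because the integrand inequality $h_K(\varphi_{\xi}^{t}\theta)\le h_L(\theta)$ is pointwise with continuous sides, it is an equality for every $\theta\in S^2\cap\xi^{\perp}$. Thus $\varphi_{\xi}(K|\xi^{\perp})=L|\xi^{\perp}$ for every $\xi\in S^2$: every projection of $K$ equals, up to a rotation, the corresponding projection of $L$. To conclude, I would run the diameter argument from the proof of Theorem~\ref{countable}. Since $\omega_K=\omega_L$, both bodies have a diameter in a common direction $\xi_0$ (a direction where $\omega_K$ attains its maximum), of equal length. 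For $w\in\xi_0^{\perp}$ such that $S^2\cap w^{\perp}$ contains no diameter direction of $K$ or $L$ other than $\pm\xi_0$, the rotation $\varphi_w$ carries the $\xi_0$-diameter of $K|w^{\perp}$ onto a maximal-length chord of $L|w^{\perp}$, which by the choice of $w$ must again have direction $\pm\xi_0$; hence $\varphi_w$ is the identity or the rotation by $\pi$, giving $K|w^{\perp}=L|w^{\perp}$ or $(-K)|w^{\perp}=L|w^{\perp}$.

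\textbf{Main obstacle.} The delicate point is to make this sign globally consistent and then pass from the admissible great circles to all of $S^2$. The two sets of admissible $w$ on which $K|w^{\perp}=L|w^{\perp}$, respectively $(-K)|w^{\perp}=L|w^{\perp}$, are relatively closed and cover the admissible set; if they overlap at some $w$ then $K|w^{\perp}$ is origin–symmetric, and if this happens on a dense set of $w$ then $K$, and hence $L$, is origin–symmetric, in which case all projections coincide and $K=L$. Otherwise a connectedness argument for the admissible directions should force one of the two cases to hold for all admissible $w$, and then the continuity of the support functions together with the density of the admissible great circles in $S^2$ gives $h_K\le h_L$ everywhere or $h_K(-\,\cdot\,)\le h_L$ everywhere, i.e. $K\subseteq L$ or $K\subseteq -L$. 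The place where one has to be careful is when $\omega_K$ has many maximizing directions (for instance if $K$ and $L$ have constant width), since then the admissible great circles need not be dense; this degenerate case has to be treated separately.
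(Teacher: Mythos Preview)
Your argument correctly reaches the key intermediate step: from the pointwise inequality on each great circle and the integral hypothesis you get equality of the great-circle integrals, hence $\varphi_{\xi}(K|\xi^{\perp})=L|\xi^{\perp}$ for every $\xi\in S^2$. This is exactly where the paper's proof also arrives, though the paper does it in one line (strict containment for some $\xi$ would persist on an open set and, after integrating over $S^2$ via Fubini, would contradict $\int h_K=\int h_L$). Your detour through $\omega_K=\omega_L$ and the Funk--Minkowski injectivity theorem is correct but unnecessary: the same Fubini argument applied directly to $h_K,h_L$ already gives $Rh_K=Rh_L$ and hence the pointwise equality on each great circle, without ever invoking injectivity of $R$.

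The real divergence is in the second step. Once all projections are congruent, the paper simply invokes Ryabogin's theorem \cite{Rubik}, which says precisely that two convex bodies in $\mathbb{R}^3$ whose hyperplane projections are $SO(2)$-congruent for every direction must satisfy $K=\pm L$. You instead try to reprove this result by hand via a diameter/connectedness argument modeled on Theorem~\ref{countable} and Theorem~\ref{tpr}. The obstacle you flag is genuine: those theorems assume countably many diameters (or at least a dense set of ``admissible'' great circles), and without such a hypothesis the argument breaks down --- constant-width bodies are the obvious bad case, and your connectedness sketch does not survive it. Handling the general case is exactly the content of \cite{Rubik}, which uses a more delicate topological argument. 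So your proof is correct up to the point where projections are shown to be congruent, and the clean completion is to cite \cite{Rubik} there rather than to redo that work.
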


\begin{proof}
  Assume that $\varphi_{\xi}(K|\xi^{\perp})$ is strictly contained in $L|\xi^{\perp}$. By continuity, there is a open set of directions in $S^2$ where the containment is strict. Integrating, we obtain $\int_{S^{2}} h_K <\int_{S^{2}} h_L$, contradicting our hypothesis. Therefore, for every $\xi\in { S^{2}}$ there exists a rotation $\varphi_{\xi}\in SO(2,\xi^{\perp})$ such that $\varphi_{\xi}(K|\xi^{\perp})= L|\xi^{\perp}$. By \cite{Rubik}, we conclude that $K=\pm L$. 
\end{proof}

\begin{lemma}
  Let  $K,L$ be two convex bodies of equal constant width in $\mathbb{R}^3$, such that 
$$
\forall\xi\in { S^{2}}, \quad \exists \varphi_{\xi}\in SO(2,\xi^{\perp}) {\mbox{ such that }} \varphi_{\xi}(K|\xi^{\perp})\subseteq L|\xi^{\perp}. 
$$
Then $vol_3(K)  \leq vol_3(L)$.  
\end{lemma}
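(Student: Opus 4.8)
The plan is to reduce the desired inequality to a comparison of surface areas, and then to invoke the classical identity, valid for a convex body of constant width $w$ in $\mathbb{R}^3$, that expresses the volume as an increasing affine function of the surface area $S(\cdot)$:
\[
vol_3(M)=\tfrac{w}{2}\,S(M)-\tfrac{\pi w^3}{3}.
\]
Since $K$ and $L$ have \emph{equal} constant width $w$, the same function $S\mapsto vol_3$ governs both bodies, so it suffices to prove $S(K)\le S(L)$.

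To obtain $S(K)\le S(L)$ I would use only the crude area monotonicity contained in the hypothesis. Each $\varphi_\xi$ is a rotation of the plane $\xi^{\perp}$ and hence preserves $2$-dimensional Lebesgue measure, so the inclusion $\varphi_\xi(K|\xi^{\perp})\subseteq L|\xi^{\perp}$ gives
\[
vol_2(K|\xi^{\perp})=vol_2\bigl(\varphi_\xi(K|\xi^{\perp})\bigr)\le vol_2(L|\xi^{\perp})\qquad\text{for every }\xi\in S^{2}.
\]
Integrating this pointwise inequality over $S^{2}$ and applying Cauchy's projection formula (see \cite{Ga}), which in $\mathbb{R}^3$ states that $S(M)$ is a fixed constant multiple of $\int_{S^{2}}vol_2(M|\xi^{\perp})\,d\xi$, yields $S(K)\le S(L)$.

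It remains to recall the constant-width volume--surface relation. Because $h_K(u)+h_K(-u)=w$ for all $u\in S^{2}$, we have $K\oplus(-K)=wB$, where $B$ is the closed unit ball. Expanding $vol_3(wB)=vol_3\bigl(K\oplus(-K)\bigr)$ by multilinearity of mixed volumes, and using $vol_3(-K)=vol_3(K)$, the reflection symmetry of mixed volumes, and the identity $V(K,K,wB)=\tfrac{w}{3}S(K)$ (which follows from the Steiner expansion $vol_3(K+tB)=vol_3(K)+3V(K,K,B)\,t+\cdots$ together with $3V(K,K,B)=S(K)$), a short computation gives $\tfrac{4}{3}\pi w^3=2w\,S(K)-4\,vol_3(K)$, i.e. the displayed identity; as a check, for a ball of radius $r$ this reads $\tfrac{4}{3}\pi r^3=r\cdot 4\pi r^2-\tfrac{8}{3}\pi r^3$. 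Combining the two steps,
\[
vol_3(K)=\tfrac{w}{2}S(K)-\tfrac{\pi w^3}{3}\le \tfrac{w}{2}S(L)-\tfrac{\pi w^3}{3}=vol_3(L),
\]
which is the assertion.

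I do not expect a genuine obstacle here: the projection hypothesis is exploited only at the level of areas, and monotonicity of surface area under ``rotate-and-fit-in'' of all hyperplane projections does the rest. The one point to get right is the constant-width identity — recognizing that equal constant width forces the map $S\mapsto vol_3$ to be one and the same increasing affine function for $K$ and for $L$ — but this is classical (it goes back to Blaschke, and also follows from the mixed-volume computation sketched above).
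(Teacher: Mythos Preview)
Your proof is correct and follows essentially the same route as the paper: both deduce $vol_2(K|\xi^\perp)\le vol_2(L|\xi^\perp)$ from the rotation-and-inclusion hypothesis, apply Cauchy's surface area formula to obtain $S(K)\le S(L)$, and then invoke the constant-width identity $2\,vol_3(K)=w\,S(K)-\tfrac{2\pi}{3}w^3$ to conclude. The only difference is that the paper simply cites this identity from the literature, whereas you sketch a derivation via $K+(-K)=wB$ and Minkowski linearity of mixed volumes (the step $V(K,K,K)+V(K,K,-K)=V(K,K,wB)=\tfrac{w}{3}S(K)$ is what makes your ``short computation'' go through).
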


\begin{proof}
The assumption on the projections implies that the surface area of $K$ is less than or equal to the surface area of $L$. 
Indeed, Cauchy's surface area formula \cite[page 408]{Ga} says that the surface area  of the body $K$ is equal to 
\[
   S(K)= \frac{1}{vol_{n-1}(B) }\int_{S^{n-1}} vol_{n-1}(K|u^\perp) du,
\] 
where $B$ is the unit Euclidean ball in $\mathbb{R}^n$. Since every projection of $K$ is contained in the corresponding projection of $L$ after a rotation, we have $vol_2(K|u^\perp) \leq vol_2(L|u^\perp)$, and Cauchy's formula gives us $S(K) \leq S(L)$. 

On the other hand, for bodies of constant  width  $w$ in $\mathbb{R}^3$, there is a known formula relating volume, surface area and width (see \cite[page 66]{CG}):
\[
      2 vol_3(K)=w S(K)-\frac{2\pi}{3} w^3.
\]
Using this relation, we conclude that $vol_3(K)  \leq vol_3(L)$.  
\end{proof}

\section{Directly Congruent Projections}
 \label{cong}
 
 In the previous sections, we have studied the problem in which the projections of $K$ can be rotated to be contained into the corresponding projections of $L$. In this section, we will assume that the projections of $K$ are directly congruent to the corresponding projections of $L$ ({\it i.e.} they coincide up to a rotation and a translation). The main argument of the proof follows ideas from Golubyatnikov \cite{Go} and  Ryabogin \cite{Rubik}. Some of the arguments we use can be found in full detail in the paper  \cite{ACR}.

\begin{theorem}\label{tpr}  Let $K$ and $L$ be two convex bodies in ${\mathbb R}^3$ having countably many diameters.  Assume that there exists a diameter $d_K(\xi_0)$, such that the side projections $K|w^\perp$, $L|w^\perp$ onto all subspaces $w^\perp$ containing $\xi_0$ are directly congruent.  Assume also that these projections are not centrally symmetric.  Then $K=\pm L +b$ for some $b \in {\mathbb R}^3$.
\end{theorem}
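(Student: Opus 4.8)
The plan is to use the distinguished diameter $d_K(\xi_0)$ to normalize the two bodies, then to establish on almost every admissible plane a rigid dichotomy between the two projections, and finally to globalize that dichotomy along the great circle of admissible planes. First I would record that for $w\in\xi_0^{\perp}$ the planar body $K|w^{\perp}$ has diameter of length $\max_{\theta\in w^{\perp}\cap S^2}\omega_K(\theta)$, and that the supremum of these quantities over admissible $w$ is $\max_{\theta\in S^2}\omega_K(\theta)$; since directly congruent planar bodies have equal diameters, the hypothesis forces $\max_\theta\omega_K(\theta)=\max_\theta\omega_L(\theta)=:2c$. Next I would show that $L$ has a diameter parallel to $\xi_0$: otherwise $\xi_0$ is not a diameter direction of $L$, and since $L$ has only countably many diameter directions one can pick $w_0\in\xi_0^{\perp}$ so that $w_0^{\perp}$ contains $\xi_0$ but no diameter direction of $L$; then $K|w_0^{\perp}$ still has diameter $2c$ (because $\xi_0\in w_0^{\perp}$) while $L|w_0^{\perp}$ has diameter strictly less than $2c$, contradicting direct congruence. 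After translating $K$ and $L$ separately—which preserves all hypotheses and only alters the eventual translation vector—I may assume $d_K(\xi_0)=d_L(\xi_0)=[-c\xi_0,\,c\xi_0]$.

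Set $C=\xi_0^{\perp}\cap S^2$. For all but countably many $w\in C$, the plane $w^{\perp}$ avoids the (countably many) diameter directions of $K$ and of $L$ other than $\pm\xi_0$, and then both $K|w^{\perp}$ and $L|w^{\perp}$ have a \emph{unique} diameter, which is necessarily the segment $[-c\xi_0,c\xi_0]$ (recall that a convex body has at most one diameter in a given direction). A direct congruence $\Phi_w$ realizing $\Phi_w(K|w^{\perp})=L|w^{\perp}$ must send longest chords to longest chords, hence fixes $[-c\xi_0,c\xi_0]$ setwise; an orientation-preserving isometry of a plane that fixes a bounded segment setwise is either the identity or the central reflection $x\mapsto -x$. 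Therefore, for all $w\in C$ outside a countable set, either $L|w^{\perp}=K|w^{\perp}$ or $L|w^{\perp}=-K|w^{\perp}$ (the reflection of $K|w^{\perp}$ through the origin).

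To remove the exceptional set, let $\delta(w)$ and $\delta'(w)$ denote the Hausdorff distances from $L|w^{\perp}$ to $K|w^{\perp}$ and to $-K|w^{\perp}$ respectively; both depend continuously on $w$, and by the previous step the product $\delta\,\delta'$ vanishes on a dense subset of $C$, hence on all of $C$. Thus $W_+=\{w\in C:\delta(w)=0\}$ and $W_-=\{w\in C:\delta'(w)=0\}$ are closed sets with $W_+\cup W_-=C$. If both were proper subsets, connectedness of the great circle $C$ would produce a point $w_*\in W_+\cap W_-$, and then $K|w_*^{\perp}=-K|w_*^{\perp}$ would be a centrally symmetric side projection, contrary to hypothesis. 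Hence $W_+=C$ or $W_-=C$. In the first case $h_K=h_L$ on $w^{\perp}\cap S^2$ for every $w\in\xi_0^{\perp}$; these great circles all pass through $\pm\xi_0$ and their union is $S^2$, so $h_K\equiv h_L$, i.e.\ $K=L$. In the second case the same computation gives $h_K(\theta)=h_L(-\theta)$ for all $\theta$, i.e.\ $K=-L$. Undoing the two normalizing translations yields $K=\pm L+b$ for the original bodies.

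The delicate point is the globalization step: a priori one might fear a \emph{fork}, with $L|w^{\perp}=K|w^{\perp}$ on one arc of $C$ and $L|w^{\perp}=-K|w^{\perp}$ on the complementary part. Both standing hypotheses are spent exactly here—``countably many diameters'' keeps the exceptional set thin enough that the identity $\delta\,\delta'\equiv0$ propagates to all of $C$, while ``non-central symmetry of the side projections'' together with the connectedness of $C$ prevents $W_+$ and $W_-$ from overlapping unless one of them exhausts $C$. The other step requiring care is the rigidity claim that $\Phi_w$ must be $\pm\mathrm{id}$, which hinges on having arranged that the two projections share the common diameter segment $[-c\xi_0,c\xi_0]$.
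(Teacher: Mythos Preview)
Your proof is correct and follows essentially the same route as the paper: normalize so that $d_K(\xi_0)=d_L(\xi_0)=[-c\xi_0,c\xi_0]$, use uniqueness of the diameter on generic side planes to force the direct congruence to be $\pm\mathrm{id}$, then globalize the resulting dichotomy via closedness/continuity and the connectedness of $S^1(\xi_0)$ together with the non-central-symmetry hypothesis. Your presentation is slightly more self-contained (you deduce the equality of diameter lengths and collapse the ``no translation part'' and ``rotation is $0$ or $\pi$'' steps into a single rigidity argument, whereas the paper cites an external lemma), but the strategy is the same.
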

 
%{\color{red} We might not even need this part.  \\

%Let $\zeta \in S^2$ be as in the hypothesis of the theorem.  First, we show that the diameters of $K$ and $L$ must be parallel, and we can translate $K$ and $L$ to make the diameters coincide and be centered at the origin.  Next, for any 2-dimensional projection of the translated bodies $\tilde{K}$ and $\tilde{L}$ that contains the diameter, the direct rigid motion given by the statement of the theorem must fix this diameter.  There are only two possibilities, namely, that the rigid motion is the identity, or a rotation by $\pi$.  Due to the lack of the corresponding symmetries, these cases are mutually exclusive.  If all rigid motions are the identity, then $\tilde{K}=\tilde{L}$.  Alternatively, if all rigid motions reflect the diameter, then $\tilde{K}=-\tilde{L}$. }

\bp
For the same reasons as in the proof of Theorem \ref{countable}, $L$ must have a diameter parallel to the direction $\xi_0$.  Denote by  $\{ \xi_0, \xi_1,\ldots \}$ the countable set of directions parallel to the diameters of $K$ and the diameters of $L$. Let $H_i$ be the plane that contains the directions $\xi_0$ and $\xi_i$, and consider the set
$$\Lambda =\left\{ w \in S^1(\xi_0): w \notin \bigcup_ i H_i  \right\}.$$
Because the set $\{H_i\}$ is countable, $\Lambda$ is everywhere dense in $S^1(\xi_0)$.
% (the proof of this is contained in the proof of Theorem \ref{countable}).  Indeed, we will show that $S^1(\xi_0) \setminus \Lambda$ is nowhere dense.  Let $w \in (S^1(\xi_0) \setminus \Lambda)$.  Then $w \in H_k$ for some $k$.  Then there are two cases that can happen.  In the first case there exists a neighborhood of planes that contains $\xi_0$ around $H_k$ such that this neighborhood does not contain any other $H_i$ for $i \not= k$.  In the second case every neighborhood of planes that contain $\xi_0$ around $H_k$ there is some other $H_\ell$.  If the first case occurs then we are done.  If the second occurs then for every neighborhood there is some plane $P_\ell$ that is not $H_i$ for every $i$.  Thus, there exists $w_\ell \in P_\ell \cap \Lambda$, and in this case we are done as well.  Therefore $\Lambda$ is everywhere dense in $S^1(\xi_0)$.
(In fact, the hypothesis that $K$ and $L$ have countably many diameters can be replaced by the weaker hypothesis that $\Lambda$ is everywhere dense.)

Next, we  translate $K$ and $L$ so that their diameters $d_K(\xi_0)$ and $d_L(\xi_0)$ coincide  and are centered at the origin.  We name the translated bodies $\tilde{K}$ and $\tilde{L}$. It is easy to show that  the side projections $\tilde{K}|w^\perp$ and $\tilde{L}|w^\perp$ coincide up to a rotation (see \cite[Lemma 14]{ACR}). 

Denote by $H'_w$ the plane that contains $w$ and $\xi_0$.  Since for all $w \in \Lambda$, the only diameter of $\tilde{K}|H'_w$ is $d_{\tilde{K}}(\xi_0)$, the direct rigid motion given by the statement of the theorem must fix this diameter,  and hence it is either the identity or a rotation about the origin by $\pi$.  In the first case, $\tilde{K}|H'_w=\tilde{L}|H'_w$ and in the second, $\tilde{K}|H'_w=-\tilde{L}|H'_w$.  Define
$$\Xi = \{ w \in S^1(\xi_0): \tilde{K}|H'_w=\tilde{L}|H'_w \}$$ 
and 
$$\Psi = \{ w \in S^1(\xi_0): \tilde{K}|H'_w=-\tilde{L}|H'_w \}.$$

By a standard argument, it is easy to prove that $\Xi$ and $\Psi$ are closed (see Lemma 3 in \cite{ACR}).  From the definitions of $\Xi$ and $\Psi$ we have that  $\Lambda \subseteq \Xi \cup \Psi \subseteq S^1(\xi_0)$, and since $\Xi \cup \Psi$ is closed and $\Lambda$ is everywhere dense we have $\Xi \cup \Psi = S^1(\xi_0)$.

Next we claim that $\Xi \cap \Psi = \emptyset$.  Indeed, assume this is not the case and let $w \in \Xi \cap \Psi$.  Then we have 
$$\tilde{L}|H'_w=\tilde{K}|H'_w=-\tilde{L}|H'_w,$$
which implies that $\tilde{L}|H'_w$ (and hence $L|H'_w$) is centrally symmetric.  This contradicts our assumption, and thus $\Xi \cap \Psi = \emptyset$.

Therefore, either $\Xi=S^1(\xi_0)$ or $\Psi=S^1(\xi_0)$.  If $\Xi=S^1(\xi_0)$, then for every $\theta \in S^2$ there exists a plane  $H'_w$ for some $w \in S^1(\xi_0)$, such that $\theta \in H'_w$.  Hence, 
$$h_{\tilde{K}}(\theta)=h_{\tilde{K}|H'_w}(\theta)=h_{\tilde{L}|H'_w}(\theta)=h_{\tilde{L}}(\theta),$$
where the second equality follows from the definition of $\Xi$. It follows that $\tilde{K}=\tilde{L}$.

If $\Psi=S^1(\xi_0)$, again for every $\theta \in S^2$, we have that $\theta \in H'_w$ for some $w \in S^1(\xi_0)$. Note  that we also have $-\theta \in H'_w$.  Hence,
$$h_{\tilde{K}}(\theta)=h_{\tilde{K}|H'_w}(\theta)=h_{-\tilde{L}|H'_w}(\theta)=h_{(-\tilde{L})|H'_w}(\theta)=h_{-\tilde{L}}(\theta),$$
where the second equality follows from the definition of $\Psi$. We conclude that $\tilde{K}=-\tilde{L}$, and the Theorem is proved. 
\ep

\begin{appendix}
\section{The sections of the cylinder and the double cone in $\mathbb{R}^3$}
\label{cal-cylcone}

Here we provide the detailed calculations for the example in Section \ref{3d} for the convenience of the reader.

\bigskip

\noindent{\bf Determining the radial function of the boundary curves of the sections of $K$ and $C$. }

\bigskip

The upper half of the double cone has equation $z=1-\sqrt{x^2+y^2}$, and the plane $\xi_\te^\perp$ has equation $z=\tan(\te)x$.  The curve of intersection in parametric equations is given by  
\[
 r_{K,\te}(t)=\langle (1-z)\cos(t),(1-z)\sin(t),z \rangle
\]
 where $z=\tan(\te) (1-z)\cos(t)$ (from the equation of the plane). Solving for $z$ in this last equation, we obtain $z=\frac{\tan(\te)\cos(t)}{1+\tan(\te)\cos(t)}$, and therefore
\[
      r_{K,\te}(t)=\left\langle \frac{\cos(t)}{1+\tan(\te)\cos(t)},\frac{\sin(t)}{1+\tan(\te)\cos(t)},\frac{\tan(\te)\cos(t)}{1+\tan(\te)\cos(t)} \right\rangle.
\]
This curve is still expressed as a subset of $\mathbb{R}^3$, so now we will write it as a two dimensional curve on the plane $\xi_\te^\perp$. The vectors $\langle 1,0,0 \rangle$ and $\langle 0,1,0 \rangle$ project onto $\vec{e}_{1,\te}=\left\langle \frac{1}{\sqrt{1+\tan^2(\te)}},0,\frac{\tan(\te)}{\sqrt{1+\tan^2(\te)}} \right\rangle=\langle \cos(\te),0,\sin(\te) \rangle$ and $\vec{e}_{2,\te}=\langle 0,1,0 \rangle$ on the plane $z=\tan(\te)x$. Therefore, for $t \in [0,\pi/2]$, the parametric curve written in this basis becomes
\[
       \widetilde{r}_{K,\te}(t)= \left( \frac{\cos(t)\sec(\te)}{1+\tan(\te)\cos(t)} \right)\vec{e}_{1,\te}+ \left(\frac{\sin(t)}{1+\tan(\te)\cos(t)} \right)\vec{e}_{2,\te}.
\]
Finally, it will be more convenient to express it in polar coordinates. Setting 
\newline $ \widetilde{r}_{K,\te}(t)=\rho_{K_\te}(u) \cos(u)\vec{e}_{1,\te}+ \rho_{K_\te}(u)\sin(u)\vec{e}_{2,\te}$ and solving, we obtain that the radial function of the section $K \cap \xi_\te^\perp$ is 
\[
    \rho_{K_\te}(u)=\frac{\sec(u)}{\sin(\te)+\sqrt{\tan^2(u)+\cos^2(\te)}},
\]
for $u \in [0,\pi/2]$. The function is extended evenly to $[-\pi/2,0]$. It can easily be checked that $\rho_{K_\te}'(u) \geq 0$ when $\te \in [0,\pi/4]$, and thus $\rho_{K_\te}(u)$ is an increasing function of $u$ on $[0,\pi/2]$, with minimum value $\rho_{K_\te}(0)=\frac{1}{\sin \te +\cos \te}$, and maximum value $\rho_{K_\te}(\pi/2)=1$. Also, for fixed $u \in [0,\pi/2]$, $\rho_{K_\te}$ is a decreasing function of $\te \in  [0,\pi/4]$. In contrast, when $\te \in (\pi/4,\pi/2]$, $\rho_{K_\te}(u)$  has a local maximum at $u=0$ and a local (and absolute) minimum at $u_0=\arctan(\sqrt{ \sin^2(t) - \cos^2(t)})$, with value $\rho_{K_\te}(u_0)=1/\sqrt{2}$. Its absolute maximum  is  $\rho_{K_\te}(\pi/2)=1$.

\bigskip

Similarly, we calculate the radial function of $C \cap \xi_\te^\perp$. The intersection of the cylinder with the plane $z=\tan(\te)x$, for $\te \in [0,\pi/4]$, is an ellipse with parametrization 
\[
      r_{C,\te}(t)=\left\langle r\cos(t), r\sin(t), r\cos(t) \tan(\te) \right\rangle.
\]
In terms of the basis $\{\vec{e}_{1,\te},\vec{e}_{2,\te}\}$, the parametrization is given by 
\[
       \widetilde{r}_{C,\te}(t)=  r\cos(t)\sec(\te)\vec{e}_{1,\te}+ r\sin(t) \vec{e}_{2,\te},
\]
and the radial function is 
\[
    \rho_{C_\te}(u)=\frac{r \sec(u)}{\sqrt{\tan^2(u)+\cos^2(\te)}},
\]
and evenly extended on $[-\pi/2,0]$. The section is an ellipse with semiaxes of length $r\sec \te$ (for $u=0$) and $r$ (for $u=\pi/2$), and the radial function is strictly decreasing on $u \in [0,\pi/2]$. It is also useful to note that for fixed $u$, $\rho_{C_\te}$ is an increasing function of $\te \in [0,\pi/4]$.

When  $\te \in [\pi/4,\pi/2]$, the plane cuts the top and bottom of the cylinder, and we obtain the following radial function:
\[
    \rho_{C_\te}(u)=\left\{ \begin{array}{cc}
   r\sec(u)\csc(\te)    & 0 \leq u \leq u_0, \\
\frac{r \sec(u)}{\sqrt{\tan^2(u)+\cos^2(\te)}} & u_0 \leq u \leq \pi/2,
\end{array}
\right.
\]
where $u_0=\arctan(\sqrt{\sin^2(\te)-\cos^2(\te)})$. The section looks like an ellipse with semiaxes of length $r\sec \te$ (along the $x$-axis) and $r$ (along the $y$-axis), that has been truncated by two vertical lines at $x= \pm r \csc(\te)$. The function $\rho_{C_\te}(u)$ has a local minimum at $u=0$,  is strictly increasing on $(0,u_0)$, reaches a local (and absolute) maximum at $u=u_0$ with $\rho_{C_\te}(u_0)=\sqrt{2}r$, and is decreasing on $(u_0,\pi/2)$. The absolute minimum  is $\rho_{C_\te}(\pi/2)=r$. Observe that the absolute maximum of $\rho_{C_\te}$ occurs at the same point as the absolute minimum of $\rho_{K_\te}$, and that $\sqrt{2}r=\rho_{C_\te}(u_0)>\rho_{K_\te}(u_0)=1/\sqrt{2}$, since $r>1/2$, thus reflecting the fact that for $\te >\pi/4$, the section of the cylinder is not contained in the section of the cone. Figure \ref{plots} shows the graphs of $\rho_{K_\te}(u)$ and $\rho_{C_\te}(u)$ with $u\in[0,\pi/2]$, for $r=0.51$. On the left, $\te=\pi/4$; on the right,  $\pi/4<\te<\pi/2$.

\begin{figure}[h!]
\centering
\includegraphics[scale=.5,page=1]{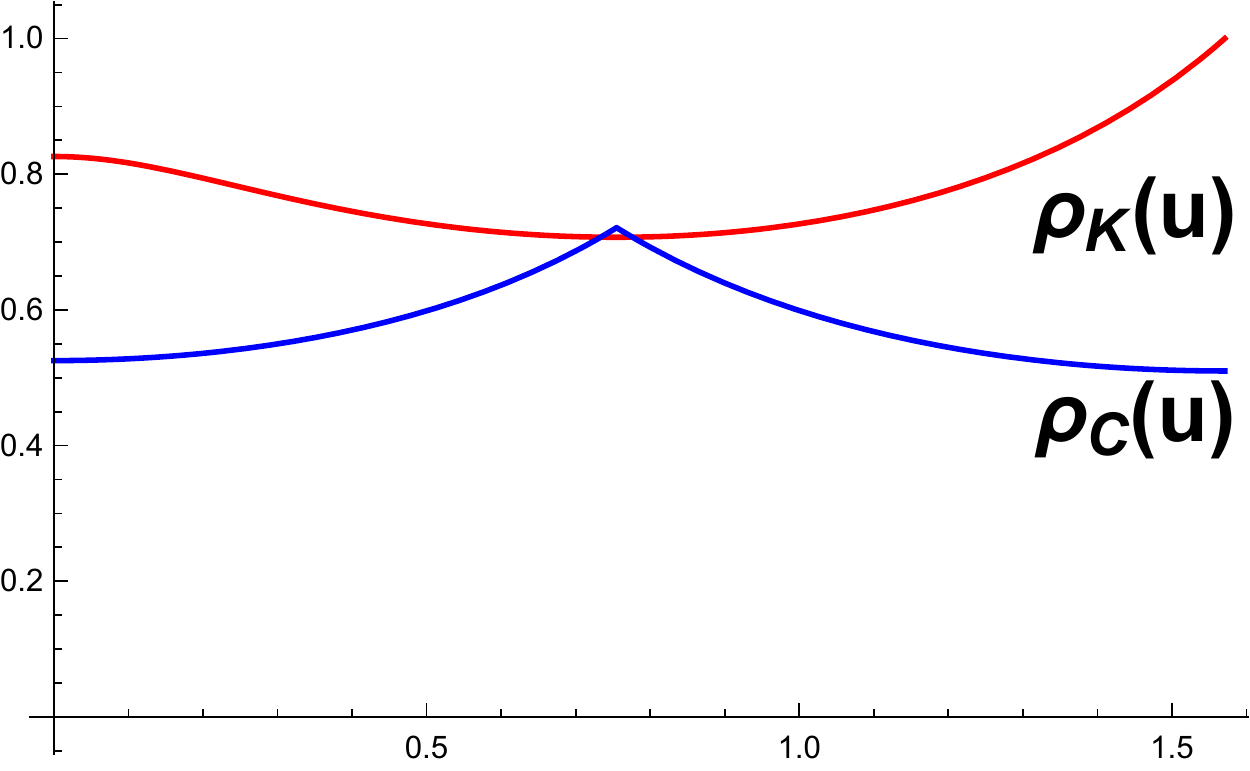}
\includegraphics[scale=.5,page=1]{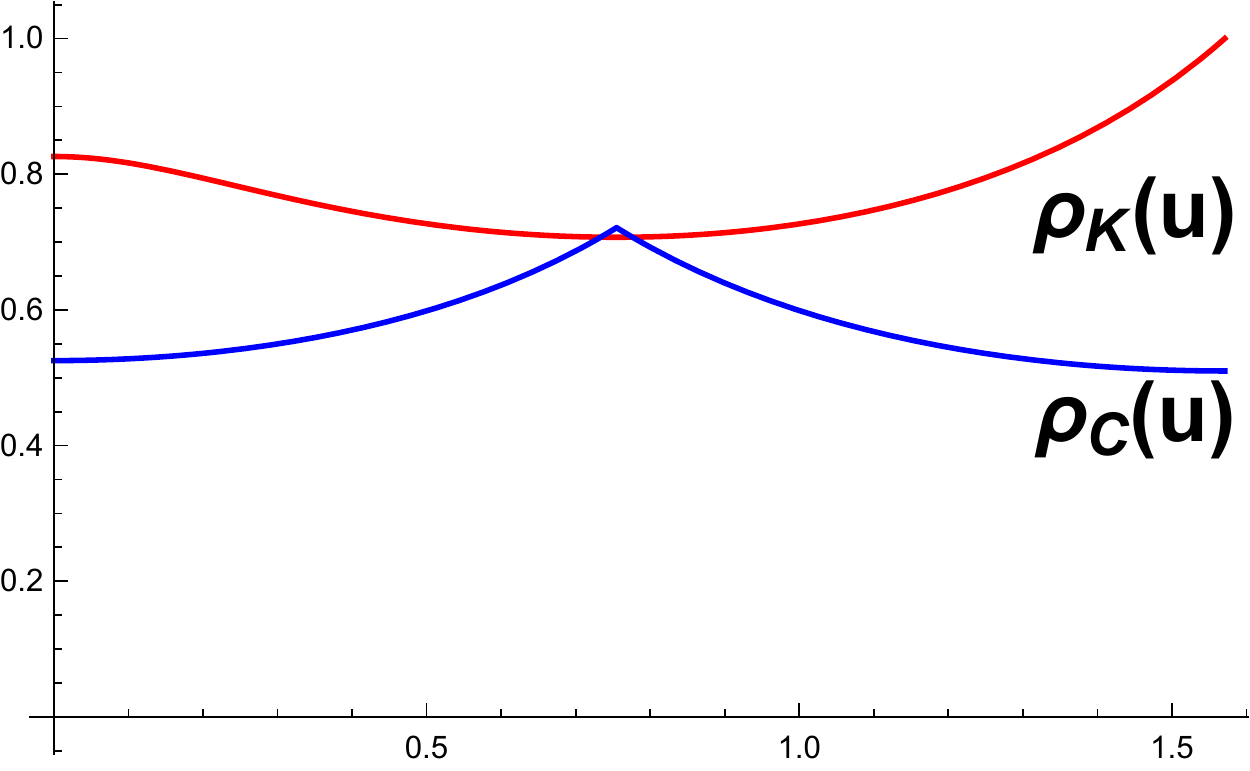}
\caption{Left: $\te=\pi/4$; Right: $\pi/4<\te<\pi/2$.}
\label{plots}
\end{figure}

\bigskip

Now we are ready to compare the sections of the cylinder and the cone on each plane $\xi_\te^\perp$. As noted in Section \ref{3d}, if $\te_0=\arctan\left(\frac{1-r}{r}\right)$, for $\te \in [0,\te_0]$, the section of the cylinder is contained in the section of the cone and there is nothing to prove. For $\te \in (\te_0,\pi/4]$, the section of the cylinder is not contained in the section of the cone, but a rotation by $\pi/2$ of the section of the cylinder is contained in the section of the cone. Since for fixed $u$, $ \widetilde{\rho}_{C_\te}(u)$ is increasing as a function of $\te\in (\te_0,\pi/4]$, while $ \rho_{K_\te}(u)$ is decreasing, it is enough to show that  for  $u \in [0,\pi/2]$,
 $ \widetilde{\rho}_{C_{\pi/4}}(u)<  \rho_{K_{\pi/4}}(u)$. Here, $\widetilde{\rho}_{C_{\pi/4}}(u)$ is the radial function of the rotation by $\pi/2$ of the section of the cone, as defined  in equation (\ref{radialcyl90}). We want to show that
 \begin{equation}
  \label{rot1}
     \frac{r^2 \csc^2(u)}{\frac{1}{2}+\cot^2(u)}   <  \frac{\sec^2(u)}{\left(\frac{1}{\sqrt{2}}+\sqrt{\tan^2(u)+\frac{1}{2}}\right)^2}.
 \end{equation}
This can be rearranged  as 
\[
       r^2 \left(1+\tan^2(u)+\sqrt{2}\sqrt{\tan^2(u)+\frac{1}{2}}\right) < \tan^2(u)\left(\frac{1}{2}+\cot^2(u)\right),
\]
or
\[
         \sqrt{2}\,  r^2  \sqrt{\tan^2(u)+\frac{1}{2}} < \tan^2(u)\left( \frac{1}{2}-r^2 \right)+(1-r^2).
\]
Squaring both sides, we obtain
\[
     0< \frac{1}{4}(1-2r^2)^2 \tan^4 u +  (1-3r^2) \tan^2 u +(1-2r^2), 
\]
a quadratic equation on $\tan^2 u$ whose discriminant is $(1-3r^2)^2-(1-2r^2)^3=r^4(8r^2-3)$. But this expression is negative for $r\in (\frac{1}{2},\sqrt{2-\sqrt{3}})$, and thus (\ref{rot1}) holds.

\noindent{\\ \bf Calculation of the angles $\te_1,\te_2$.}\\

As noted in Section \ref{3d}, when $\te=\pi/4$, the rotation by $\pi/2$ of $C \cap \xi_\te^\perp$ is strictly contained in the section of the double cone, and by continuity the same is true for $\te \in (\pi/4,\te_1)$ for some angle $\te_1$. Similarly, for $\te=\pi/2$ the rotation of the section of the cylinder by $u_0=\pi/4$ is strictly contained in the section of the double cone, and thus the same must hold for $\te \in (\te_2,\pi/2)$. Here we compute $\te_1$ and $\te_2$, and prove that $\te_2 <\te_1$, allowing us to always rotate the section of the cylinder to fit into the section of the cone.

\begin{figure}[h!]
\centering
\includegraphics[scale=.5,page=1]{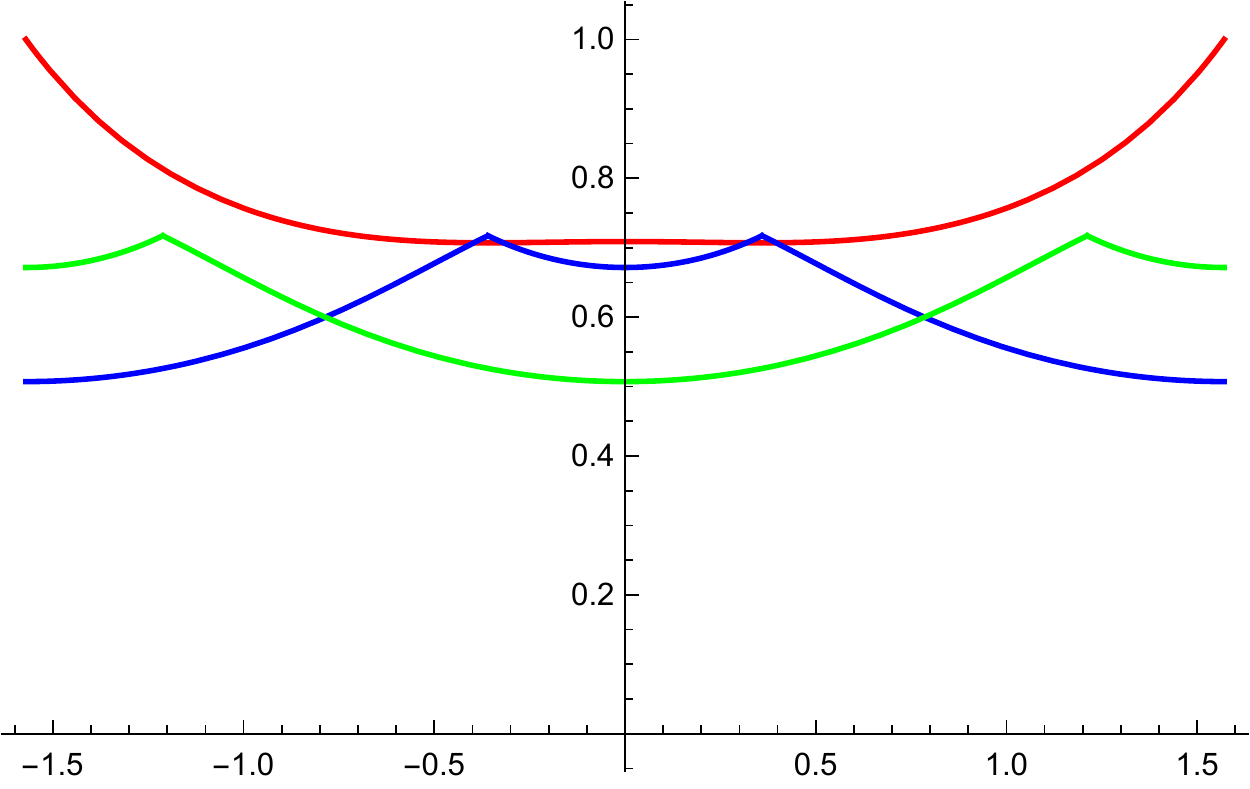}
\includegraphics[scale=.5,page=1]{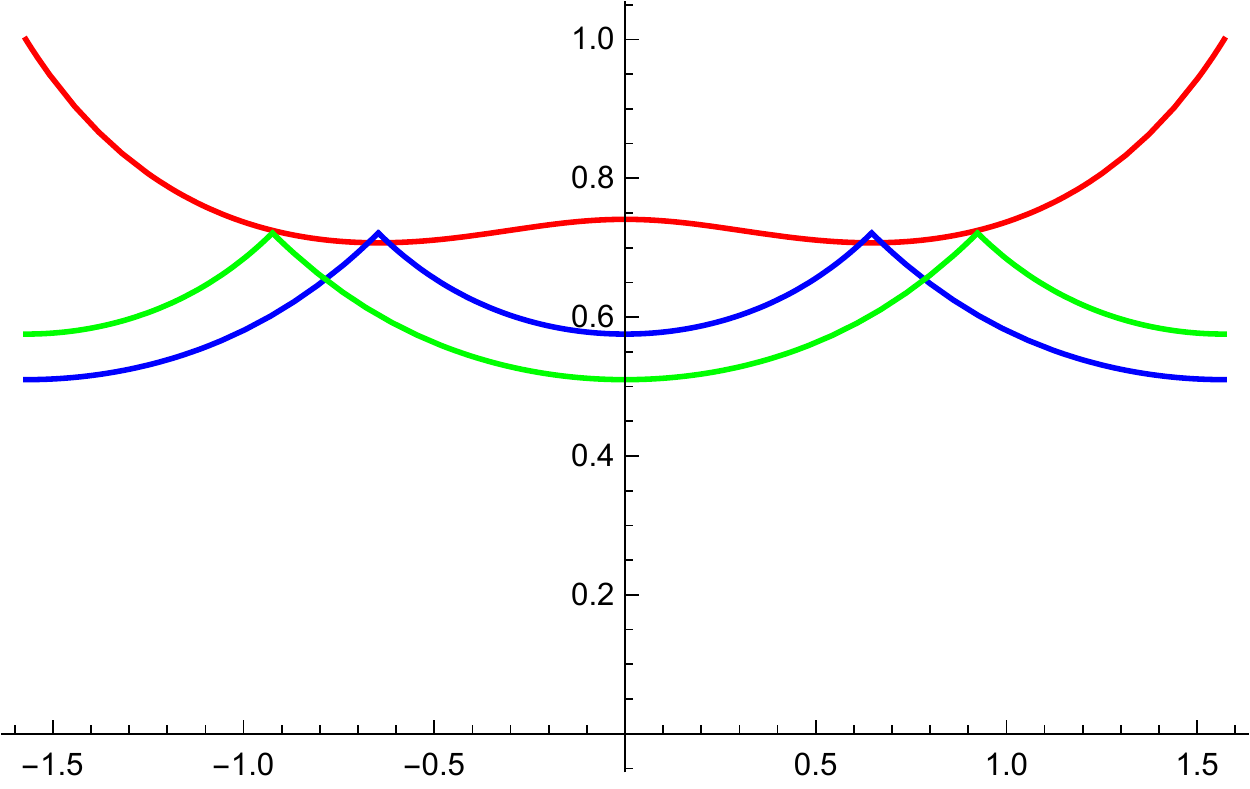}
\caption{The radial functions of the sections of the cone (red), cylinder (blue) and the rotation of the cylinder by $\pi/2$ (green). In both figures, $\te \in (\pi/4,\te_1)$. On the left, $\te$ is close to $\pi/4$; on the right, $\te$ is close to $\te_1$.  }
\label{theta1}
\end{figure}

Let 
\[
        \widetilde{\rho}_{C_\te}(u)=\left\{ \begin{array}{cc}
\frac{r \csc(u)}{\sqrt{\cot^2(u)+\cos^2(\te)}} & 0  \leq u \leq \pi/2-u_0, \\
   r\csc(u)\csc(\te)    & \pi/2-u_0 \leq u \leq \pi/2, \\
\end{array}
\right.
\]
be the radial function of the rotation by $\pi/2$ of the section of the cylinder  for $\te \in (\pi/4,\pi/2]$.
Observing Figures \ref{3secs} and \ref{theta1}, we notice that the sections of the cone and the cylinder will touch first at the ``corner'' point $u=\pi/2-u_0$, where $ \widetilde{\rho}_{C_\te}(u)$  has its maximum. Thus, we will  define  $\te_1$ as the angle such that  $\widetilde{\rho}_{C_{\te_1}}(\pi/2-u_0)=\rho_{K_{\te_1}}(\pi/2-u_0)$.  As seen above,  $\widetilde{\rho}_{C_{\te_1}}(\pi/2-u_0)=\sqrt{2}r$,  while for the cone we have
\[
     \rho_{K_\te}(\pi/2-u_0)=\frac{\sqrt{2}}{(1 + \sqrt{-1 - 2 \sec(2 \te)}) \sqrt{\sin^2(\te)-\cos^2(\te)}}.
\]
These two expressions will be equal if 
\begin{eqnarray*}
   r^{-2}= (1+  \sqrt{-1 - 2 \sec(2 \te_1)})^2 \left( \sin^2(\te_1)-\cos^2(\te_1) \right)\\
    =2 - 2 \cos(2 \te_1) \sqrt{-1 - 2 \sec(2 \te_1)},
\end{eqnarray*}
or equivalently, $-4 \cos(2 \te) \, (2 + \cos(2 \te))=(2-r^{-2})^2$, which is a quadratic equation on $ \cos(2 \te)$, with solutions $-1 \pm \sqrt{1 -\frac{(2-r^{-2})^2}{4}}$. Only the positive sign makes sense, and we obtain that the two radial functions are equal at $u=\pi/2-u_0$ only for $\te=\te_1$, where
\[
    \te_1=\frac{1}{2}\arccos \left(-1 +\frac{ \sqrt{4 r^2 - 1}}{2 r^2} \right).
\]

\bigskip
 
Now we compute $\te_2$. Let $\widehat{\rho}_{C_\te}(u)=\rho_{C_\te}(u-u_0)$. By the above considerations on $\rho_{C_\te}$, the two absolute maxima of $\widehat{\rho}_{C_\te}$  happen at $u=0$ and $u=2u_0$; the local minima happen at $u=-\pi/2+u_0$ and at $u= u_0$, (see Figure  \ref{closeto45}). At the point $u=0$ where $\widehat{\rho}_{C_\te}$ has a maximum with value $\sqrt{2}r$,  $\rho_{K_\te}$ has a local maximum with value $1/(\sin \te+ \cos \te)$. The two values coincide for $\te_2= \frac{1}{2} \arcsin \left( 1/(2r^2)-1 \right)$, and $\widehat{\rho}_{C_\te}(0)<\rho_{K_{\te}}(0)$ for $\te> \frac{1}{2} \arcsin \left( 1/(2r^2)-1 \right)$. We claim that   $\widehat{\rho}_{C_\te}(u)<\rho_{K_{\te}}(u)$ for every $u\in[-\pi/2,\pi/2]$ and $\te \in (\te_2,\pi/2]$. In fact, the slope at $u=0$ for $\rho_K$ is zero, while for $\widehat{\rho}'_{C_\te}(0+)$ is negative, so it decreases faster; both functions attain their local minimum at $u=u_0$, with $\widehat{\rho}_{C_\te}(u_0)=r \csc \te$ and $\rho_{K_\te}(u_0)=1/\sqrt{2}$. But $r \csc \te_2 <1/\sqrt{2}$ for $r\in  (1/2, \sqrt{2-\sqrt{3}})$, and $r\csc \te$ is decreasing in $\te$. Hence the cylinder function stays below the cone up to $u=u_0$. And at the other maximum for the cylinder, $\widehat{\rho}_{C_\te}(2u_0)<\rho_{K_{\te}}(2u_0)$.  

\begin{figure}[h!]
\centering
\includegraphics[scale=.5,page=1]{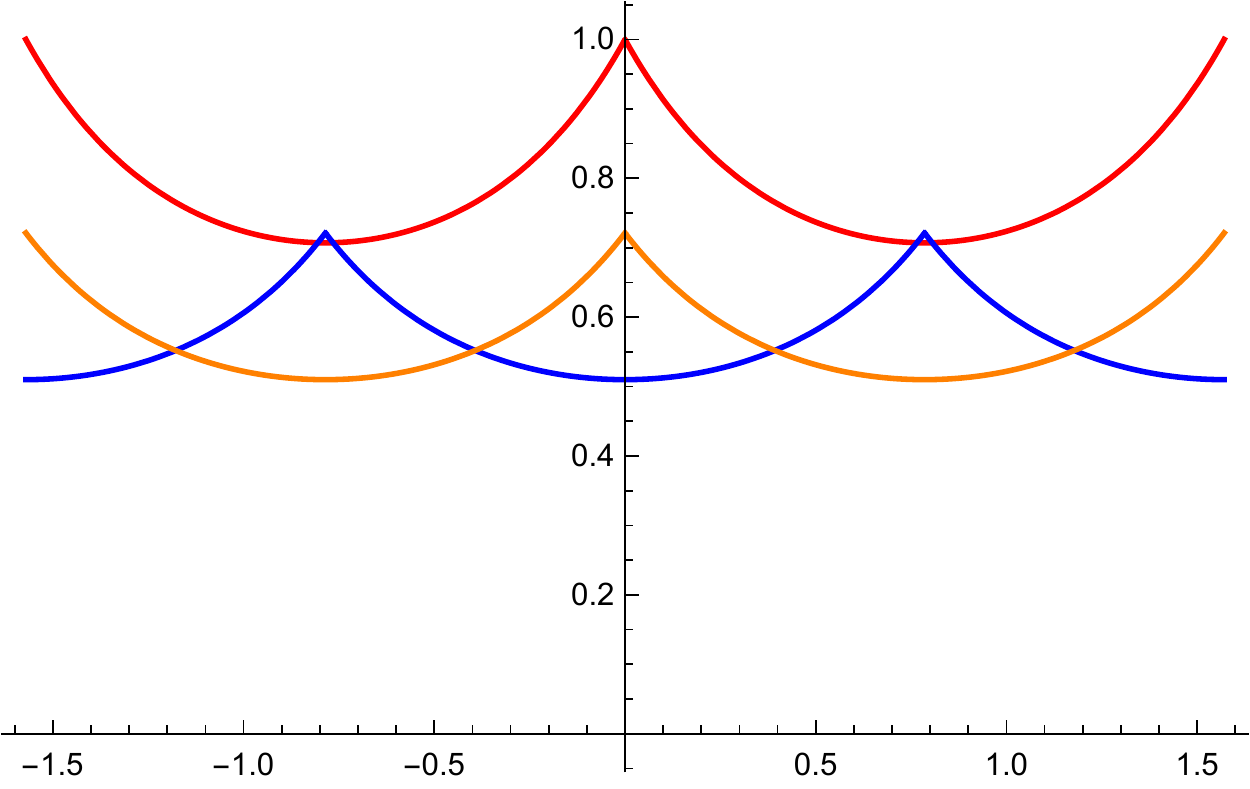}
\includegraphics[scale=.5,page=1]{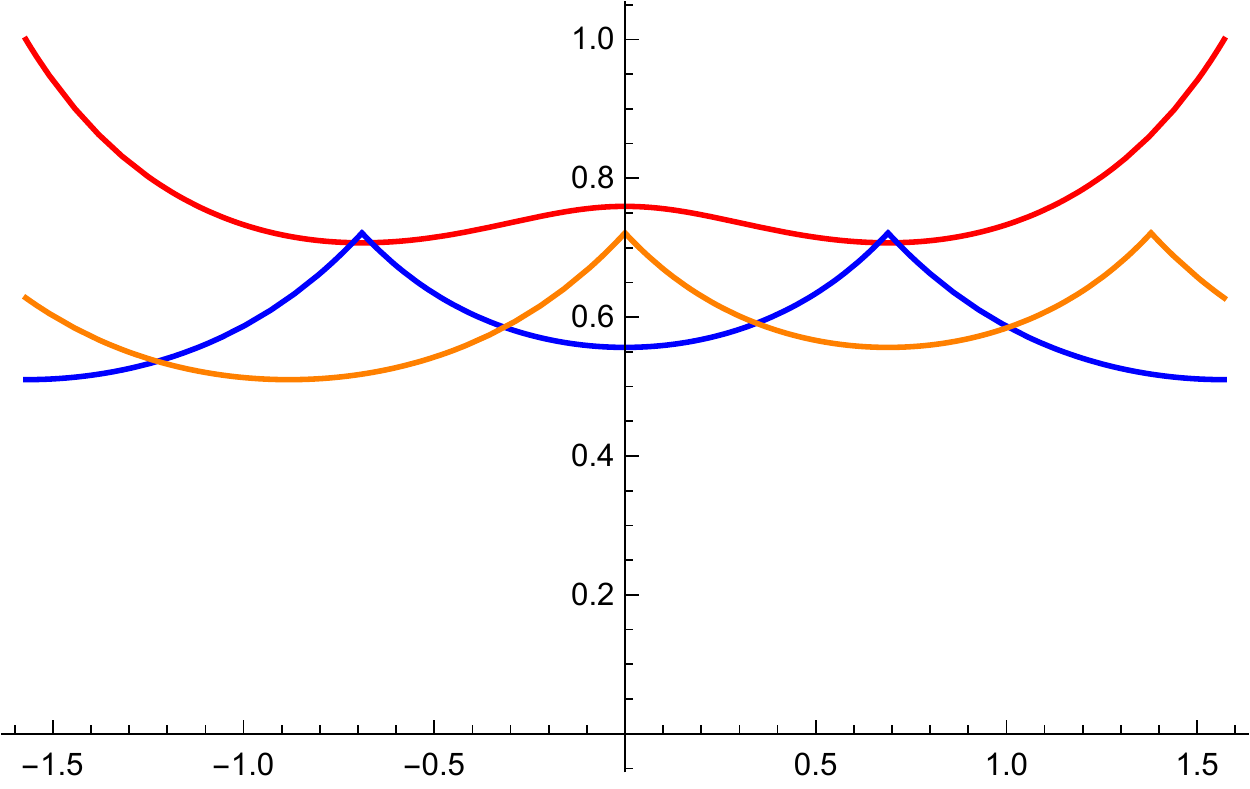}
\caption{The radial functions of the sections of the cone (red), cylinder (blue) and the rotation of the cylinder by $u_0$ (orange). The left figure shows the case $\te=\pi/2$, and the right one $\te=\te_2$.}
\label{closeto45}
\end{figure}

\bigskip

Finally, let us check that   $\te_2 <\te_1$ for $r\in (1/2, \sqrt{2-\sqrt{3}})$.  Indeed, $\cos (2\te_1)=-1+\frac{ \sqrt{4 r^2 - 1}}{2 r^2}$, while $\cos(2\te_2)= -\frac{ \sqrt{4 r^2 - 1}}{2 r^2}$, and the angles will be equal if $\frac{ \sqrt{4 r^2 - 1}}{ r^2}=1$, or $r^4-4r^2+1=0$, which has solutions $r=\pm \sqrt{2 \pm \sqrt{3}}$. Since for $r=1/2$, $\pi/4=\te_2<\te_1=\pi/2$, the same relation holds for $r \in (1/2, \sqrt{2-\sqrt{3}})$. We have proved that all sections of the cylinder can be rotated into the corresponding section of the double cone.

\bigskip

\begin{lemma}
  \label{no3Drot}
 No three-dimensional rotation of the cylinder $C$ fits inside the cone $K$.
\end{lemma}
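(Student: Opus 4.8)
The plan is to show that the cylinder $C$ (radius $r > 1/2$, height $2r$) is ``too fat'' in every direction to fit inside the double cone $K$. The key geometric feature of $K$ is that its widest cross-section perpendicular to the axis is the unit disk at height $z=0$, and the body narrows linearly to points at $z = \pm 1$. I would quantify this by bounding, for each unit vector $\eta$, the largest inscribed segment of $K$ through the origin in direction $\eta$, or better, by using the support function / width of $K$ in suitable directions. The cylinder $C$, on the other hand, has the property that it contains a whole flat disk of radius $r > 1/2$, namely $\{(x,y,0): x^2+y^2 \le r^2\}$, together with the full axial segment of length $2r > 1$.

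First I would suppose, for contradiction, that some $\psi \in SO(3)$ satisfies $\psi(C) \subseteq K$. Let $\eta = \psi(e_3)$ be the image of the cylinder's axis. Then $\psi(C)$ contains a segment of length $2r$ in direction $\eta$ centered at the origin, so the width of $K$ in direction $\eta$ is at least $2r$; since $K$ is centrally symmetric this forces $h_K(\eta) \ge r > 1/2$. The only directions in which $K$ has support value exceeding $1/2$... actually $h_K(\eta) \le 1$ always, with equality only at $\eta = \pm e_3$, and $h_K$ is close to $1$ only for $\eta$ near the poles; so $\eta$ must lie in a small cap around $\pm e_3$. Next I would use the flat disk: $\psi(C)$ contains a disk $\Delta$ of radius $r$ centered at the origin, lying in the plane $\eta^\perp$. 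This disk must be contained in the planar convex region $K \cap \eta^\perp$. But when $\eta$ is within a small cap of $e_3$, the section $K \cap \eta^\perp$ is exactly one of the sections $K_\theta$ analyzed in the Appendix (for small $\theta$), whose radial function $\rho_{K_\theta}(u)$ has minimum value $\frac{1}{\sin\theta + \cos\theta} < 1$ attained at $u = 0$. For the disk of radius $r$ to fit inside $K_\theta$ we would need $r \le \frac{1}{\sin\theta+\cos\theta}$, while simultaneously the axial constraint needs $\theta$ small enough that $h_K(\eta) = \rho_{K_\theta}(\pi/2)\,\big|\ldots \ge r$; combining these two inequalities and using $r > 1/2$ produces a contradiction. (The boundary case is handled by noting $\frac{1}{\sin\theta+\cos\theta} \le 1$ with equality only at $\theta = 0$, where the section is the unit disk, and then the axis segment of length $2r>1$ does not fit since $K$ has width $2$ only at the equatorial directions, not along $e_3$.)

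The cleanest packaging avoids case analysis on $\theta$: since $\psi(C)$ contains both the disk $\Delta$ of radius $r$ in the plane $\eta^\perp$ and the segment $I$ of length $2r$ along $\eta$, it contains the ``spindle'' $\conv(\Delta \cup I)$, a double cone with equatorial radius $r$ and half-height $r$. So it suffices to show this spindle $\conv(\Delta \cup I)$ cannot be placed inside $K$ with center at the origin. For that I compare support functions: for the direction $\eta$ itself, $h_{\text{spindle}}(\eta) = r$, but more usefully I would look at a direction $\zeta$ in the plane spanned by $\eta$ and $e_3$, chosen so that $h_K(\zeta)$ is minimized relative to $h_{\text{spindle}}(\zeta)$. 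A direct computation (of the kind already carried out in the Appendix) shows that for $r > 1/2$ there is always such a $\zeta$ with $h_{\text{spindle}}(\zeta) > h_K(\zeta)$, contradicting $\psi(C) \subseteq K$.

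The main obstacle is the optimization over the rotation $\psi$: a priori $\psi$ has three degrees of freedom, and one must argue that the only relevant parameter is the tilt angle $\theta$ between the cylinder's axis $\eta$ and $e_3$ (by rotational symmetry of both $C$ and $K$ about their axes), and then within that one-parameter family exhibit the violated inequality uniformly in $\theta$. Reducing to this one-parameter problem, and then choosing the right test direction $\zeta$ (or equivalently the right section plane) to get a contradiction valid for all $r \in (1/2, \sqrt{2-\sqrt 3})$, is the crux; the remaining inequality is a routine trigonometric estimate analogous to those already appearing in the Appendix.
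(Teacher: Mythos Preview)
Your approach has a genuine gap: both the ``disk plus segment'' argument and the spindle argument throw away precisely the feature of the cylinder that makes the lemma true.  Observe that every extreme point of your spindle $\conv(\Delta\cup I)$ (the two apices $\pm r\eta$ and the equatorial circle of radius $r$ in $\eta^\perp$) lies at distance exactly $r$ from the origin, so the spindle is contained in the ball $B(0,r)$.  But the inscribed ball of $K=\{|z|+\sqrt{x^2+y^2}\le 1\}$ has radius $1/\sqrt{2}$, and since $r<\sqrt{2-\sqrt{3}}<1/\sqrt{2}$ we have $B(0,r)\subset K$.  Hence the spindle sits inside $K$ for \emph{every} choice of axis $\eta$, and no contradiction is available.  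The same computation shows that your earlier constraints are vacuous: the axial segment of length $2r$ fits in every direction (indeed $h_K\ge 1/\sqrt{2}>r$ everywhere, not only near the poles; note $h_K(e_1)=1$, so your claim that $h_K$ is close to $1$ only near $\pm e_3$ is incorrect), and the central disk of radius $r$ fits in every central section $K\cap\eta^\perp$ since $\min_u\rho_{K_\theta}(u)=\frac{1}{\sin\theta+\cos\theta}\ge \frac{1}{\sqrt{2}}>r$.

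What your reduction discards is that the cylinder carries its full disk of radius $r$ all the way up to height $r$ along its axis, not only at height $0$.  The paper's proof uses exactly this: after reducing by symmetry to rotations about the $x$-axis by an angle $\varphi\in(0,\pi/2]$, it exhibits, for each $\varphi$, an explicit point $P(\varphi)$ on the \emph{top rim} of $C$ (so at height $r$) whose image $R(\varphi)$ under the rotation still has $z$-coordinate $r$ and has distance $r$ from the $z$-axis.  Since the cone at height $z=r$ has radius $1-r<r$, the point $R(\varphi)$ lies outside $K$.  To repair your argument you would need to track a top-rim point (or equivalently the top disk) rather than the central disk.
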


\begin{proof}
By construction, $C \nsubseteq K$.  Since both $C$ and $K$ are origin symmetric and rotational symmetric, it is enough to consider rotations of $C$ around the $x$-axis by an angle  $\varphi \in (0,\frac{\pi}{2}]$. We will show that for each angle $\varphi \in (0,\frac{\pi}{2}]$, there is a point $P(\varphi)$ on the top rim of $C$,  that remains outside of  $K$ after a rotation by the angle $\varphi$ around the $x$-axis. Consider the point $P(\varphi)=(r\cos\alpha_0, r\sin\alpha_0,r)$,   where $\alpha_0=\arcsin \left( \frac{1-\cos\varphi}{\sin\varphi} \right)$. The rotation of angle $\varphi$ maps $P(\varphi)$ to the point  $R(\varphi)=(r\cos\alpha_0,r\sin\alpha_0\cos\varphi-r\sin\varphi,r\sin\alpha_0\sin\varphi+r\cos\varphi)
=\left( \frac{r\sqrt{\sin^2\varphi-(1-\cos\varphi)^2}}{\sin\varphi},\frac{r(\cos\varphi-1)}{\sin\varphi},r\right)$.  Note that the $z$-coordinate is positive, hence it will be enough to show that $R(\varphi)$ is outside the top part of the cone $K$, whose equation is $z=1-\sqrt{x^2+y^2}$.  But it is clear that  $1-\sqrt{\left( \frac{r\sqrt{\sin^2\varphi-(1-\cos\varphi)^2}}{\sin\varphi} \right)^2+\left( \frac{r(\cos\varphi-1)}{\sin\varphi} \right)^2} = 1-r < \frac{1}{2} < r$.  Therefore, $R(\varphi)$ is outside the cone and no three-dimensional rotation of the cylinder fits inside the cone.

\end{proof}

\end{appendix}


\begin{thebibliography}{99}

\bibitem{ACR}{\sc M.A. Alfonseca, M. Cordier, D. Ryabogin},
\newblock {\em On bodies with directly congruent projections and sections},
\newblock  http://arxiv.org/abs/1412.2727.

\bibitem{CG}{\sc G. Chakerian, H. Groemer},
\newblock {\em Convex bodies of constant width},
\newblock  Convexity and its Applications, Springer Basel AG (1983), 49--96. 


\bibitem{Ga}{\sc R. J.~Gardner}, 
\newblock {\em Geometric tomography}, Second edition. 
\newblock Encyclopedia of Mathematics and its Applications, {\bf 58}, Cambridge University
Press, Cambridge, 2006.

\bibitem{GRYZ}{\sc R.J.~Gardner, D. Ryabogin, V. Yaskin, A. Zvavitch},
\newblock {\em A problem of Klee on inner section functions of convex bodies},
\newblock J. Diff. Geom., {\bf 91} (2012), 261--279.

\bibitem{Go}{\sc V. P. ~Golubyatnikov}, 
\newblock {\em Uniqueness questions in reconstruction of multidimensional objects from tomography type projection data},
\newblock {\it Inverse and Ill-posed problems series}, Utrecht-Boston-Koln-Tokyo,  2000.


\bibitem{Ha}{\sc H. Hadwiger}, 
\newblock{\em Seitenrisse konvexer K\"orper und Homothetie}, 
\newblock Elem. Math., {\bf 18} (1963), 97--98.


\bibitem{K1}{\sc D. Klain},
\newblock \emph{Covering shadows with a smaller volume,}
\newblock   Adv. Math. 224 (2010), no. 2, 601--619. 

\bibitem{K}{\sc A. Koldobsky},
\newblock \emph{Fourier Analysis in Convex Geometry,}
\newblock  Mathematical Surveys and Monographs, 116. American Mathematical Society, Providence, RI, 2005. 

\bibitem{P}{\sc C. M. Petty},
\newblock \emph{Projection bodies,}
\newblock Proc. Coll. Convexity (Copenhangen 1965), Kobenhavns Univ. Math. Inst. 234--241.

\bibitem{Rubik} {\sc D. Ryabogin}, 
\newblock \emph{On the continual Rubik's cube,}
\newblock Adv. Math. 231(2012), no 6, 3429--3444.

\bibitem{Sc1}{\sc  R. Schneider}, 
\newblock \emph{Zu eine problem von Shephard \"uber die projektionen konvexer K\"orper,}
\newblock Math. Z. 101 (1967), 71--82.

\bibitem{Sh} {\sc  G. C. Shephard}, 
\newblock \emph{Shadow systems of convex bodies,}
\newblock Israel J. Math. 2(1964), 229--306.


\end{thebibliography}
\end{document}